\theoremstyle{plain}
\newtheorem{thm}{Theorem}[section]
\newtheorem{prop}[thm]{Proposition}
\theoremstyle{definition}
\newtheorem{defn}{Definition}[section]
\theoremstyle{remark}
\newtheorem{remark}{Remark}
\newcommand{\C}{\mathbb C}
\newcommand{\R}{\mathbb R}
\newcommand{\N}{\mathbb N}
\newcommand{\Oo}{\mathcal O}
\newcommand{\PT}{\mathcal P\mathcal T}
\def\eps{\varepsilon}
\def\ri{{\rm i}}
\def\bi{\begin{itemize}}
\def\ei{\end{itemize}}
\newcommand{\B}{\mathcal{B}}
\newcommand{\Z}{\mathbb{Z}}
\newcommand{\cD}{{\mathcal D}}
\newcommand{\cE}{{\mathcal E}}
\newcommand{\cH}{{\mathcal H}}
\newcommand{\pa}{\partial}
 \def\dd{\, {\rm d}}
\DeclareMathOperator{\spann}{span}
\DeclareMathOperator*{\esssup}{ess\,sup}
\DeclareMathOperator{\Real}{Re}
\DeclareMathOperator{\Imag}{Im}
\DeclareOldFontCommand{\it}{\normalfont\itshape}{\mathit}
\newcommand{\bspm}{\left(\begin{smallmatrix}}\newcommand{\espm}{\end{smallmatrix}\right)}
\newcommand{\bpm}{\begin{pmatrix}}\newcommand{\epm}{\end{pmatrix}}
\newcommand{\trans}{{\mathsf T}}
\def\blem{\begin{lemma}}\def\elem{\end{lemma}}
\def\bthm{\begin{theorem}}\def\ethm{\end{theorem}}
\def\bcor{\begin{corollary}}\def\ecor{\end{corollary}}
\def\beq{\begin{equation}}\def\eeq{\end{equation}}
\def\beqq{\begin{equation*}}\def\eeqq{\end{equation*}}
\def\bal{\begin{align}}\def\eal{\end{align}}
\def\bpf{\begin{proof}}\def\epf{\end{proof}}
\def\bex{\begin{example}}\def\eex{\end{example}}
\def\brem{\begin{remark}}\def\erem{\end{remark}}
\def\bass{\begin{assumption}}\def\eass{\end{assumption}}
\def\bprop{\begin{proposition}}\def\eprop{\end{proposition}}
\def\bdefi{\begin{definition}}\def\edefi{\end{definition}}
\begin{document}

\title{Eigenvalue Bifurcation in Doubly Nonlinear Problems with an Application to Surface Plasmon Polaritons}
\date{\today}
\author{Tom\'a\v{s} Dohnal$^*$ and Giulio Romani}
\affil{Institut f\"{u}r Mathematik,  Martin-Luther-Universit\"{a}t Halle-Wittenberg,\\ 06099 Halle (Saale), Germany\\
\small{tomas.dohnal@mathematik.uni-halle.de$^*$, giulio.romani@mathematik.uni-halle.de}}

\maketitle
\abstract{We consider a class of generally non-self-adjoint eigenvalue problems which are nonlinear in the solution as well as in the eigenvalue parameter (``doubly'' nonlinear). We prove a bifurcation result from simple isolated eigenvalues of the linear problem using a Lyapunov-Schmidt reduction and provide an expansion of both the nonlinear eigenvalue and the solution. We further prove that if the linear eigenvalue is real and the nonlinear problem $\PT$-symmetric, then the bifurcating nonlinear eigenvalue remains real.
These general results are then applied in the context of surface plasmon polaritons (SPPs), i.e. localized solutions for the nonlinear Maxwell's equations in the presence of one or more interfaces between dielectric and metal layers. We obtain the existence of transverse electric SPPs in certain $\PT$-symmetric configurations.}
\vskip0.2truecm

%-----------------------
\section{Introduction}\label{S:intro}

We study the nonlinear problem
\begin{equation}\label{eq}
L(x,\omega)\varphi:=A\varphi-W(x,\omega)\varphi=f(x,\omega,\varphi),\quad x\in\R^d,
\end{equation}
where $A:L^2(\R^d,\C)\supset D(A)\to L^2(\R^d,\C)$ is a densely defined, closed (possibly non-self-adjoint) operator with a non-empty resolvent set. Throughout the paper the space $L^2(\R^d,\C)$ and all other function spaces are complex vector spaces, i.e. defined over the complex field $\C$. The potential $W$ is generally nonlinear in the spectral parameter $\omega$ and typically complex valued. The function $f$ is nonlinear in both $\omega$ and $\varphi$ and is asymptotically equivalent to a monomial near $\varphi=0$. Moreover, we suppose that $f$ is Lipschitz continuous in a neighbourhood of an eigen-pair $(\omega_0,\varphi_0)$, where $\omega_0 \in \C$ is a simple isolated eigenvalue of $L(x,\cdot)$. We prove the bifurcation from $\omega_0$ using a fixed point argument and a Lyapunov-Schmidt decomposition. Bifurcation from simple eigenvalues is a well studied problem even in the non-selfadjoint case \cite{CR,Dancer,Ize,Niren}. In particular, bifurcation in complex Banach spaces (as relevant in our problem) is investigated in \cite{Dancer,Ize} by means of a Lyapunov-Schmidt reduction coupled with topological degree techniques. However, our result includes also an asymptotic expansion of $(\omega,\varphi)$ depending on the behaviour of $f$ for small $\varphi$ and for $\omega$ near $\omega_0$. 
More precisely, we find a solution $(\omega,\varphi)\in\C\times D(A)$ of the form
\begin{equation*}
\omega=\omega_0+\varepsilon\nu+\varepsilon^{1+\tau}\sigma,\qquad\quad\varphi=\varepsilon^\alpha\varphi_0+\varepsilon^{\alpha+1}\phi+\varepsilon^{\alpha+1+\tau}\psi,
\end{equation*}
where $\varepsilon>0$ is small, $\omega$ is the spectral parameter, $\alpha$ is related to the degree of homogeneity of $f(x,\omega,\cdot)$ near $0$, $\tau$ is a positive parameter, and $\nu,\sigma\in\C$ as well as $\phi,\psi\in D(A)\cap\langle\varphi_0^*\rangle^\perp$ are uniquely determined. Moreover, $\nu$ is explicit, see \eqref{nu_GEN}, and $\phi$ satisfies the linear equation \eqref{phi_GEN}.

In \cite{DS} the bifurcation was proved (and an asymptotic expansion of $(\omega,\varphi)$ was provided) for 
\begin{equation}\label{eq_DS}
(A-\omega)\varphi=\eps f(\varphi)
\end{equation}
with $A$ as above. This problem clearly has a linear dependence on the spectral parameter $\omega$. The coefficient $\eps$ in \eqref{eq_DS} is the bifurcation parameter and one studies the bifurcation from an eigenvalue $\omega_0$ at $\eps=0$. As the bifurcation parameter appears explicitly in the equation, the form of the asymptotic expansion of $(\omega,\varphi)$ is unique. Note that \eqref{eq_DS} can be rescaled to $(A-\omega)\psi=f(\psi)$ only for the case of homogeneous nonlinearities $f$. Therefore, our result extends that of \cite{DS} to the case of more general nonlinearities $f$ and a nonlinear dependence of both $f$ and $W$ on the spectral parameter.

An important application of non-selfadjoint problems which are  nonlinear in $\omega$ is the propagation of electromagnetic waves in dispersive media, in particular in structures that include a metal. Interfaces of two different media can support localized waves. A typical example is a surface plasmon polariton (SPP) at the interface of a dielectric and a metal, see e.g. \cite{Rae,Pitarke_2006} or, when more layers of dielectrics and/or metals are considered, \cite{YXH,WRY,Han_2014}. The general case is, of course, described by Maxwell's equations. Assuming the absence of free charges, we have
\begin{equation}\label{Max}
\begin{split}
\mu_0\partial_t \mathcal H=-\nabla\times\mathcal E,\qquad\epsilon_0\partial_t \mathcal D=\nabla\times \mathcal H,\qquad\nabla\cdot \mathcal D=\nabla\cdot \mathcal H=0,
\end{split}
\end{equation}
where $\cE$ and $\cH$ is the electric and magnetic field respectively, $\cD=\cD(\cE)$ is the electric displacement field and $\epsilon_0$ and $\mu_0$ are respectively the permittivity and the permeability of the free space.
The displacement field $\mathcal D$ is generally nonlinear in $\mathcal E$ and non-local in time. For odd (e.g. Kerr) nonlinearities and a monochromatic field $(\mathcal E,\mathcal H,\mathcal D)(x,y,z,t)=(E,H,D)(x,y,z)e^{\ri\omega t}+ \text{c.c.}$ (with a real frequency $\omega$) a nonlinear eigenvalue problem in $(\omega, (E,H))$ is obtained if higher harmonics are neglected, for details see Sec. \ref{Section_SPPs}. Equation \eqref{Max} as well as the eigenvalue problem have to be accompanied by interface conditions if an interface of two media is present. Assuming that the interface is planar and parallel to the $yz$-plane, the interface conditions are
\beq\label{E:IFC}
\llbracket E_2\rrbracket =\llbracket E_3\rrbracket =\llbracket D_1\rrbracket =0,\, \llbracket H\rrbracket =0,
\eeq
where we define (for the interface located at $x=x_0$), $\llbracket E_2\rrbracket :=E_2(x\to x_0^+)-E_2(x\to x_0^-),$ etc., see Sec. \ref{Section_SPPs}. 

A simple example in the cubically nonlinear case is obtained in structures independent of the $y,z$-variables by choosing the transverse electric (TE) ansatz 
\begin{equation}\label{E:ans-SPP}
E(x,y,z)=(0,0,\varphi(x))^T e^{\ri ky}
\end{equation}
with $k\in\R$. It leads to the scalar nonlinear problem
\begin{equation}\label{Schrod}
\varphi'' + W(x,\omega)\varphi +\Gamma(x,\omega)|\varphi|^2\varphi=0, \quad x\in \R
\end{equation}
with functions $W,\Gamma:\R^2\to\C$. The interface conditions here boil down to the continuity condition on $\varphi$ and $\varphi'$, see Section \ref{Section_SPPs}.

The bifurcation result provides a curve $\eps \mapsto \big(\omega(\eps),\varphi(\eps)\big)$ with $\omega(0)=\omega_0$ and $\varphi(0)=0$. Even if $\omega_0$ is real, the curve can lie in $\C\setminus \R$ for all $\eps\neq 0$. In order for $\omega$ to correspond to the (real) frequency of an electromagnetic field, one needs to ensure that the curve lies in $\R$. As we show in Sec. \ref{S:PT}, this is possible by restricting the fixed point argument to a symmetric subspace, namely the $\PT$-symmetric subspace. $\PT$-symmetry has been studied extensively in quantum mechanics, see e.g. \cite{Ben,CY2018}. Recently, a number of physics papers have studied nonlinear $\PT$-symmetric problems from a phenomenological point of view mainly with emphasis on localized solutions, e.g. \cite{Rubinstein-2007-99,Zezyulin-2012-85,CY2018}. In the context of SPPs, where metals normally lead to a lossy propagation, $\PT$-symmetry has been applied to obtain lossless propagation, see \cite{AD14,Barton2018}. Mathematically, the restriction of a fixed point argument to a $\PT$-symmetric (or more generally antilinearly symmetric) subspace has been used to obtain real nonlinear eigenvalues, see, e.g., \cite{Rubinstein-2010-195,DS,DP}.

This article is organized as follows. In Section \ref{SectionBIFURC} we state and prove our main bifurcation result (Theorem \ref{Theorem_BIF_NLeigv}). The realness of the nonlinear eigenvalue is ensured in the case of $\PT$-symmetry in Section \ref{S:PT}. Applications to SPPs are then given in Section \ref{Section_SPPs}, where 2- and 3-layer-configurations are investigated.

%-----------------------
\section{Bifurcation of nonlinear eigenvalues}\label{SectionBIFURC}

In this section we study problem \eqref{eq}, where $A$, $W$ and $f$ satisfy assumptions (A1)-(f4) below. We prove the existence of a branch of solutions starting from an eigenpair $(\omega_0,\varphi_0)\in\C\times D(A)$, i.e. $L(\cdot,\omega_0)\varphi_0=0$, such that
\begin{enumerate}[\text{E}1)]
	\item $\omega_0$ is \textit{algebraically simple} in the sense that $\kappa=0$ is an algebraically simple eigenvalue of the standard eigenvalue problem $L(\cdot,\omega_0)u=\kappa u$, i.e. $\ker(L(\cdot,\omega_0)^2)=\ker(L(\cdot,\omega_0))=\langle\varphi_0\rangle$,
	\item $\omega_0$ is \textit{isolated} in the sense that $\kappa=0$ is an isolated eigenvalue of the problem $L(\cdot,\omega_0)u=\kappa u$. %there exists $\rho>0$ such that $0\in\sigma(L(\cdot,\omega))$ for $\omega\in B_\rho(\omega_0)\subset\C$ if and only if $\omega=\omega_0$.
\end{enumerate}

\underline{Notation}: Henceforth, the norm and the inner product in the underlying Hilbert space $L^2(\R^d,\C)$ will be denoted by $\|\cdot\|$ and $\langle \cdot,\cdot\rangle$ respectively. Moreover, $\|\cdot\|_\infty$ stands for the $L^\infty(\R^d)$ norm and $\|\cdot\|_A$ for the graph norm, i.e. $\|u\|_A:=\|u\|+\|Au\|$. Note that $D(A)=D(L)$ due to assumptions (A2) and (W1).
\vskip0.2truecm

We define $\varphi_0^*$ as the\footnote{Recall that if $0$ is a simple isolated eigenvalue of $L(\cdot,\omega_0)$, then it is also a simple isolated eigenvalue of $L(\cdot,\omega_0)^*$, cf. \cite[Chap.III.6.5-6]{Kato}} eigenfunction of $\big(L(\cdot,\omega_0)\big)^*$ and we suppose that the normalizations of the eigenfunctions $\varphi_0$ and $\varphi_0^*$ are chosen such that
\begin{equation*}
\|\varphi_0\|=1,\qquad\langle\varphi_0,\varphi_0^*\rangle=1.
\end{equation*}
The latter normalization is allowed since the simplicity of $\omega_0$ ensures that $\langle\varphi_0,\varphi_0^*\rangle\neq 0$. Indeed, if $\langle \varphi_0,\varphi_0^* \rangle =0$, then $\varphi_0 \in {\rm Ker}(L(\cdot,\omega_0)) \cap {\rm Ran}(L(\cdot,\omega_0))$ since ${\rm Ker}(L(\cdot,\omega_0)^*)^\perp = {\rm Ran}(L(\cdot,\omega_0))$. From $L(\cdot,\omega_0)\psi=\varphi_0$ for some $\psi\in D(A)$ we get $L(\cdot,\omega_0)^2\psi=0$ and the algebraic simplicity in (E1) implies $\psi = c\varphi_0$ and hence $\varphi_0=0$, which is a contradiction.

\medskip
\noindent We consider the following assumptions on the operator $A$ and the potential $W$: there exists $\delta>0$ such that
\begin{enumerate}[\text{A}1)]
	\item $A: D(A)\to L^2(\R^d,\C)$ is a densely defined, closed operator with a non-empty resolvent set;
	\item $D(A)\hookrightarrow L^\infty(\R^d,\C)$, where the embedding is continuous;
\end{enumerate}
\begin{enumerate}[\text{W}1)]
	\item $W:\R^d\times \C\to \C$ satisfies that $W(x,\cdot)$ is holomorphic on $B_\delta(\omega_0)\subset \C$ for a.e. $x\in \R^d$ and there exists $c>0$ such that
	$$\|W(\cdot,\omega)\|_\infty\leq c \quad \forall \omega\in B_\delta(\omega_0);$$
\end{enumerate}
and the technical assumption
\begin{enumerate}[\text{Wt})]
	\item $\langle\partial_\omega W(\cdot,\omega_0)\varphi_0,\varphi_0^*\rangle\not=0$.
\end{enumerate}
Regarding the nonlinearity $f=f(x,\omega,\varphi)$, we assume that there are $\delta,\alpha,\lambda_0>0$ such that
\begin{enumerate}[\text{f}1)]
\item $f(\cdot,\omega,\varphi(\cdot))\in L^2(\R^d,\C)$ for all $\omega \in  B_\delta(\omega_0)$ and $\varphi \in B_\delta(0)\subset D(A)$,
	\item there exists a constant $K_f^\omega(\varphi_0)>0$ such that for any $\lambda\in(0,\lambda_0)$ there holds
	$$\|f(\cdot,\omega_1,\lambda\varphi)-f(\cdot,\omega_2,\lambda\varphi)\|\leq K_f^\omega(\varphi_0)\lambda^{\frac1\alpha+1}|\omega_1-\omega_2|,\quad\forall\omega_1,\omega_2\in B_\delta(\omega_0)\;,\forall\varphi\in B_\delta(\varphi_0);$$
	\item there exists a constant $K_f^\varphi(\omega_0)>0$ such that for any $\lambda\in(0,\lambda_0)$ there holds
	$$\|f(\cdot,\omega,\lambda\varphi_1)-f(\cdot,\omega,\lambda\varphi_2)\|\leq K_f^\varphi(\omega_0)\lambda^{\frac1\alpha+1}\|\varphi_1-\varphi_2\|,\quad\forall\varphi_1,\varphi_2\in B_\delta(\varphi_0)\;,\forall\omega\in B_\delta(\omega_0);$$
	\item for any $\omega\in B_\delta(\omega_0)$ there exists $g_\omega\in L^\infty(\R^d,\C)\setminus\{0\}$ such that  
	\begin{equation}\label{asympt_beh2}
	\left\|f(\cdot,\omega,\varphi)-g_\omega(\cdot)|\varphi|^\frac1\alpha \varphi\right\|=\Oo\left(\left\||\varphi|^{\frac1\alpha+1+\beta}\right\|\right)\quad\mbox{as}\quad \C \ni \varphi\to 0
	\end{equation}
	for some $\beta>0$, uniformly wrt $\omega\in B_\delta(\omega_0)$. Moreover, we assume that $g_\omega$ is Lipschitz in $\omega$ for $\omega\in B_\delta(\omega_0)$ uniformly wrt $x\in\R^d$, i.e. there exists a constant $K_g>0$ such that $$\|g_{\omega_1}-g_{\omega_2}\|_\infty\leq K_g|\omega_1-\omega_2| \quad\forall\omega_1,\omega_2\in B_\delta(\omega_0).$$
\end{enumerate}

\begin{thm}\label{Theorem_BIF_NLeigv}
	Suppose that (E1), (E2) hold, i.e. $\omega_0$ is an algebraically simple and isolated eigenvalue of $L$ with eigenfunction $\varphi_0$ and that $A$, $W$ and $f$ satisfy assumptions (A1)-(f4). Let also $\tau\in(0,\min\{1,\alpha\beta\}]$. Then there is a unique branch bifurcating from $(\omega_0,0)$. There exists $\varepsilon_0>0$ s.t. for any $\varepsilon\in(0,\varepsilon_0)$ the solution $(\omega,\varphi)$ normalized to satisfy $\langle\varphi,\varphi_0^*\rangle=\varepsilon^\alpha$ has the form
	\begin{equation}\label{omega_varphi}
	\omega=\omega_0+\varepsilon\nu+\varepsilon^{1+\tau}\sigma,\qquad\quad\varphi=\varepsilon^\alpha\varphi_0+\varepsilon^{\alpha+1}\phi+\varepsilon^{\alpha+1+\tau}\psi,
	\end{equation}
	with $\nu,\sigma\in\C$ and $\phi,\psi\in D(A)\cap\langle\varphi_0^*\rangle^\perp$.
\end{thm}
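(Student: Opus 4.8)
The plan is to carry out a Lyapunov--Schmidt reduction tailored to the prescribed ansatz \eqref{omega_varphi}. I would first introduce the spectral projection $Pu:=\langle u,\varphi_0^*\rangle\varphi_0$ onto $\langle\varphi_0\rangle$ and its complement $Q:=\Id-P$, whose range is $\langle\varphi_0^*\rangle^\perp$. Since $\omega_0$ is algebraically simple and isolated, $0$ is a simple isolated eigenvalue of $L(\cdot,\omega_0)$, hence this operator is Fredholm of index $0$ with $\range(L(\cdot,\omega_0))=\kernel(L(\cdot,\omega_0)^*)^\perp=\langle\varphi_0^*\rangle^\perp$, and its restriction $L_0:=L(\cdot,\omega_0)|_{D(A)\cap\langle\varphi_0^*\rangle^\perp}$ is a bijection onto $\langle\varphi_0^*\rangle^\perp$ with inverse bounded in the graph norm. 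The constraint $\langle\varphi,\varphi_0^*\rangle=\varepsilon^\alpha$ is automatically encoded by the ansatz, because $\langle\varphi_0,\varphi_0^*\rangle=1$ while $\phi,\psi\in\langle\varphi_0^*\rangle^\perp$. Inserting the ansatz into \eqref{eq} and applying $P$ and $Q$ splits the equation into a scalar \emph{bifurcation equation} and an infinite-dimensional \emph{range equation}.

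Next I would expand in $\varepsilon$, using the holomorphy (W1) to write $W(\cdot,\omega)=W(\cdot,\omega_0)+(\omega-\omega_0)\partial_\omega W(\cdot,\omega_0)+\Oo((\omega-\omega_0)^2)$ and the asymptotics (f4) to isolate the leading monomial $f=g_\omega|\varphi|^{\frac1\alpha}\varphi+\Oo(|\varphi|^{\frac1\alpha+1+\beta})$. As $\varphi=\varepsilon^\alpha\varphi_0+\Oo(\varepsilon^{\alpha+1})$, the three contributions $L(\cdot,\omega_0)\varphi$, $-(\omega-\omega_0)\partial_\omega W\,\varphi$ and $f$ all balance at order $\varepsilon^{\alpha+1}$, which yields
\begin{equation*}
L(\cdot,\omega_0)\phi-\nu\,\partial_\omega W(\cdot,\omega_0)\varphi_0=g_{\omega_0}|\varphi_0|^{\frac1\alpha}\varphi_0 .
\end{equation*}
Pairing with $\varphi_0^*$ annihilates the first term (as $L(\cdot,\omega_0)^*\varphi_0^*=0$) and fixes $\nu$ uniquely,
\begin{equation*}
\nu=-\frac{\langle g_{\omega_0}|\varphi_0|^{\frac1\alpha}\varphi_0,\varphi_0^*\rangle}{\langle\partial_\omega W(\cdot,\omega_0)\varphi_0,\varphi_0^*\rangle},
\end{equation*}
the denominator being nonzero by (Wt). With this $\nu$ the right-hand side lies in $\range(L(\cdot,\omega_0))$, so $\phi=L_0^{-1}\big(g_{\omega_0}|\varphi_0|^{\frac1\alpha}\varphi_0+\nu\,\partial_\omega W(\cdot,\omega_0)\varphi_0\big)$ is the unique solution in $D(A)\cap\langle\varphi_0^*\rangle^\perp$.

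Having fixed $\nu$ and $\phi$, I would regard $(\sigma,\psi)$ as the unknowns. Substituting the full ansatz, subtracting the order-$\varepsilon^{\alpha+1}$ identity just solved and dividing by $\varepsilon^{\alpha+1+\tau}$, the equation takes the form $L(\cdot,\omega_0)\psi-\sigma\,\partial_\omega W(\cdot,\omega_0)\varphi_0=R(\varepsilon,\sigma,\psi)$, where $R$ collects the second-order $W$-term, the $\omega$- and $\phi$-corrections of the leading monomial, and the $\Oo(|\varphi|^{\frac1\alpha+1+\beta})$ remainder of $f$. Projecting with $P$ gives $\sigma=-\langle R,\varphi_0^*\rangle/\langle\partial_\omega W(\cdot,\omega_0)\varphi_0,\varphi_0^*\rangle$, and projecting with $Q$ gives $\psi=L_0^{-1}\big(QR+\sigma\,Q[\partial_\omega W(\cdot,\omega_0)\varphi_0]\big)$; together these define a map $T(\sigma,\psi)$ on a closed ball of $\C\times\big(D(A)\cap\langle\varphi_0^*\rangle^\perp\big)$ whose fixed points are exactly the sought solutions of \eqref{eq}.

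The main obstacle is to prove that $T$ is a self-map of a fixed ball and a contraction, uniformly for small $\varepsilon$, and this is where the range of $\tau$ is decisive. Writing the amplitude as $\lambda\sim\varepsilon^\alpha$ gives $\lambda^{\frac1\alpha+1}=\varepsilon^{\alpha+1}$, so after division by $\varepsilon^{\alpha+1+\tau}$ the $(\sigma,\psi)$-independent part of $R$ is bounded by $\Oo(\varepsilon^{1-\tau})$ (from the quadratic $W$-term and, via (f2)--(f3), from the $\omega$- and $\phi$-variations of the leading monomial) plus $\Oo(\varepsilon^{\alpha\beta-\tau})$ (from the (f4) remainder); the hypothesis $\tau\le\min\{1,\alpha\beta\}$ is precisely what keeps these exponents nonnegative, so the source stays bounded and $T$ preserves a ball of fixed radius. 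The genuine $(\sigma,\psi)$-dependence, on the other hand, always enters through the explicit prefactors $\varepsilon^{1+\tau}\sigma$ and $\varepsilon^{\alpha+1+\tau}\psi$ combined with the scaled Lipschitz bounds (f2)--(f3), so its contribution to the Lipschitz constant of $T$ is $\Oo(\varepsilon)$, making $T$ a contraction once $\varepsilon_0$ is small. The structural subtlety behind all the estimates is that $f$ is only Lipschitz with non-integer homogeneity $\frac1\alpha$, so one may not Taylor-expand it: the scheme must rely on the scaled Lipschitz estimates (f2)--(f3) for the corrections and on (f4) for the leading behaviour, with the embedding $D(A)\hookrightarrow L^\infty$ from (A2) converting the $L^\infty$-control of $W$ and $g_\omega$ into $L^2$-bounds on the relevant products. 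Banach's fixed point theorem then delivers the unique $(\sigma,\psi)$ and hence the asserted solution.
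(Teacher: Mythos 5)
Your proposal follows the paper's proof almost verbatim in structure: the same projections $P,Q$ onto $\langle\varphi_0\rangle$ and $\langle\varphi_0^*\rangle^\perp$, the same determination of $\nu$ by pairing the order-$\varepsilon^{\alpha+1}$ balance with $\varphi_0^*$, the same Fredholm solvability of the $\phi$-equation, and the same bookkeeping showing that the condition $\tau\le\min\{1,\alpha\beta\}$ keeps the residual bounded after division by $\varepsilon^{\alpha+1+\tau}$. The only organizational difference is that you run a single joint fixed point for $(\sigma,\psi)$, whereas the paper solves a nested system (first $\psi=\psi(\sigma)$ for each fixed $\sigma$, then the scalar equation for $\sigma$); the paper explicitly notes that the direct version is possible.

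There is, however, one concrete flaw in your contraction argument. You claim that the $(\sigma,\psi)$-dependence of $T$ "always enters through the explicit prefactors $\varepsilon^{1+\tau}\sigma$ and $\varepsilon^{\alpha+1+\tau}\psi$," so that the Lipschitz constant of $T$ is $\Oo(\varepsilon)$. This contradicts your own formula $\psi=L_0^{-1}\bigl(QR+\sigma\,Q[\partial_\omega W(\cdot,\omega_0)\varphi_0]\bigr)$: the term $\sigma\,\partial_\omega W(\cdot,\omega_0)\varphi_0$ enters the range equation at exactly order $\varepsilon^{\alpha+1+\tau}$ (it is the term $R_2$ in the paper's \eqref{4_sys}), so after dividing by $\varepsilon^{\alpha+1+\tau}$ the $\sigma$-dependence of the $\psi$-component of $T$ carries an $\Oo(1)$ Lipschitz constant, not $\Oo(\varepsilon)$ (compare the paper's estimate $\|\psi(\sigma_1)-\psi(\sigma_2)\|_A\le 3C|\sigma_1-\sigma_2|$). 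Hence $T$ is generally \emph{not} a contraction on the product space in the natural norm $|\sigma|+\|\psi\|_A$. The argument is rescuable because the Lipschitz "matrix" of $T$ is of the form $\bigl(\begin{smallmatrix}\Oo(\varepsilon)&\Oo(\varepsilon)\\ \Oo(1)&\Oo(\varepsilon)\end{smallmatrix}\bigr)$, whose spectral radius is $\Oo(\sqrt\varepsilon)$: one can either pass to a weighted norm $|\sigma|+w\|\psi\|_A$ with $w>0$ small but fixed, show that $T^2$ is a contraction, or — as the paper does — nest the two fixed points so that the $\Oo(1)$ Lipschitz dependence of $\psi$ on $\sigma$ is only ever multiplied by the $\Oo(\varepsilon)$ factors in the scalar equation. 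As written, though, the step "making $T$ a contraction once $\varepsilon_0$ is small" does not follow.
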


Before going into the details of the proof, let us give some remarks and examples of applications.

\begin{remark}
	The proof of Theorem \eqref{Theorem_BIF_NLeigv} is constructive. Therefore, we also know which problems the constants $\nu,\sigma$ and the functions $\phi,\psi$ satisfy. In particular, $\nu$ and $\phi$ are uniquely determined by \eqref{nu_GEN}, \eqref{phi_GEN} and $(\sigma,\psi)$ uniquely solves the nonlinear system \eqref{3_sys}, \eqref{4_sys} in $B_{r_1}(0)\times B_{r_2}(0)\subset\C\times D(A)$, for suitable $r_1,r_2=O(1)$ as $\varepsilon\to0$.
\end{remark}

\begin{remark}
	When $D(A)$ is the Sobolev space $H^s(\R^d,\C)$, assumption (A2) is equivalent to the requirement $s>d/2$.
\end{remark}

\begin{remark}\label{R:Taylor}
Note that assumption (W1) ensures that
\beq\label{W-der-est}
\|\pa_\omega^kW(\cdot,\omega_0)\|_{\infty} \leq c<\infty \quad \forall k  \in \N.
\eeq
Indeed, by Taylor's theorem for holomorphic functions (see Chapt. 4, Sec. 3.1 in \cite{ahlfors1953}) we have
\beq\label{E:Taylor-exp-W}
W(\cdot,\omega)=\sum_{j=0}^{n-1}\pa_\omega^jW(\cdot,\omega_0)(\omega-\omega_0)^j + T_n(\cdot,\omega)(\omega-\omega_0)^n,
\eeq
where 
$$T_n(\cdot,\omega)=\frac{1}{2\pi\ri} \int_{\pa B_r(\omega_0)}\frac{W(\cdot,z)}{(z-\omega_0)^n(z-\omega)}dz$$
for all $\omega \in B_r(\omega_0)$ and any $0<r<\delta$.
One can easily estimate
\begin{align}
\|T_n(\cdot,\omega)\|_\infty 
&\leq  \frac{\esssup\limits_{x\in \R^d}\max\limits_{z\in \pa B_r(\omega_0)}|W(x,z)|}{r^{n-1}(r-|\omega-\omega_0|)} \;\quad \text{for all}\;\; \omega\in B_{r'}(\omega_0) \;\ \text{if} \;\ 0<r'<r \nonumber\\
&\leq  \frac{2M}{r^n} \quad \text{for all}\quad \omega\in B_{r/2}(\omega_0), \label{E:Tn-est}
\end{align}
where
$$M:=\sup\limits_{\omega\in B_\delta(\omega_0)}\|W(\cdot,\omega)\|_\infty.$$ 
Note that $\omega\in B_{r/2}(\omega_0)$ is satisfied if $\eps>0$ is small enough. To estimate $\|\pa_\omega^kW(\cdot,\omega_0)\|_{\infty}, k \in\N$, one proceeds by induction using \eqref{E:Taylor-exp-W}, assumption (W1), and \eqref{E:Tn-est}.
\end{remark}

\begin{remark}
Assumption (Wt) is the classical transversality condition for the bifurcation from a simple eigenvalue \cite[Theorem 1]{CR}, \cite[Theorem 28.6]{deimlingNLFA} in the case 
\beq \label{E:fC2}f(x,\cdot,\cdot)\in C^1(\C\times \Omega),  \ \pa_\omega\pa_\varphi f(x,\cdot,\cdot)\in C(\C\times \Omega) \ \text{ for some neighborhood } \  \Omega\subset D(A) \ \text{ of zero.}
\eeq
However, note that in our setting the nonlinearity $f(\cdot,\omega,\varphi)\sim g_\omega(\cdot)|\varphi|^\frac1\alpha \varphi$ for $\varphi\to 0$ (and with $\varphi \in D(A)$ with a suitable $D(A)$, e.g. $D(A)=H^2(\R,\C)$ for $d=1$) is differentiable only at zero. This is due to the fact that our function spaces are defined over the complex field.

Nevertheless, given the differentiability property, if \eqref{E:fC2} holds, (Wt) is equivalent to $F_{\omega \varphi}(\omega_0,0)[\varphi_0]\notin \text{Ran}(F_\varphi(\omega_0,0))$, where $F(\omega,\varphi):=L(\cdot,\omega)\varphi-f(\cdot,\omega,\varphi)$. To see this, first note that
$$F_\varphi(\omega_0,0)=A-W(\cdot,\omega_0)$$
since $\pa_\varphi f(\cdot,\omega,0)=0$ for all $\omega\in B_\delta(\omega_0)$ by (f4), and
$$F_{\omega \varphi}(\omega_0,0)[\varphi_0]=-\pa_\omega W(\cdot,\omega_0)\varphi_0 - \pa_\omega \pa_\varphi f(\cdot,\omega_0,0)\varphi_0=-\pa_\omega W(\cdot,\omega_0)\varphi_0.$$
By the closed range theorem is $(A-W(\cdot,\omega_0))u=-\pa_\omega W(\cdot,\omega_0)\varphi_0$ solvable if and only if (Wt) is violated. Here we have used the fact that $A-W(\cdot,\omega_0)$ is a Fredholm operator (in particular, $\text{Ran}(A)$ is closed), see \cite[Theorem IV.5.28]{Kato}. 
\end{remark}

\begin{remark}\textit{(Example of the potential and the nonlinearity.)}
Assumptions (A1)-(f4) are satisfied for instance by equation \eqref{Schrod} with $D(A)=H^2(\R,\C)$ provided (W1) and (Wt) hold and $\Gamma$ satisfies
$$\|\Gamma(\cdot,\omega_1)-\Gamma(\cdot,\omega_2)\| \leq L |\omega_1-\omega_2|$$
for some $L>0$ and all $\omega_{1,2}\in B_\delta(\omega_0)$. An example corresponding to the Drude model for metals (see Sec. \ref{Section_SPPs}) is 
$$W(x,\omega)=\frac{\omega^2}{c^2}\left(1-\frac{\omega_p^2}{\omega^2+\ri\gamma\omega}\right)-k^2, \quad\; \Gamma(x,\omega)=3\frac{\omega^2}{c^2}\hat{\chi}^{(3)}(x,\omega)$$ 
with parameters $\gamma,\omega_p,k\in\R$ and $\hat{\chi}^{(3)}$ a bounded function, Lipschitz continuous in $\omega$. 
This choice will be important for modelling SPPs. Note that, with such potential $W$, the operator $L(\cdot,\omega):=\partial_x^2+W(\cdot,\omega)$ is non-selfadjoint and also nonlinear in $\omega$.
\end{remark}
\begin{remark}
	Let us discuss the role of the parameter $\tau$ in the expansion \eqref{omega_varphi}. Clearly, $\tau$ determines the accuracy of the expansion given by the first two terms. According to Theorem \ref{Theorem_BIF_NLeigv}, if $\alpha\beta \leq 1$, then the optimal value is $\tau =\alpha\beta$, which is proportional to the difference of the degree of the lowest degree term in $f$ and the next term. Notice also that higher order terms in the nonlinearity do not play any role in the choice of $\tau$. To give an example, consider the nonlinearities in the table below.

\begin{table}[H]
	\centering
	\begin{tabular}{ccccc}
		\toprule
		$f(x)$ & $\alpha$ & $\beta$ & $\tau_{\max}$ & $\omega$\\
		\midrule
		$|\varphi|^2\varphi$ & $\frac12$ & $+\infty$ & $1$ &$\omega_0+\varepsilon\nu+\varepsilon^2\sigma$\\
		$|\varphi|^2\varphi+|\varphi|^4\varphi$ & $\frac12$ & $2$ & $1$ & $\omega_0+\varepsilon\nu+\varepsilon^2\sigma$\\
		$|\varphi|^2\varphi+|\varphi|^3\varphi$ & $\frac12$ & $1$ & $\frac12$ & $\omega_0+\varepsilon\nu+\varepsilon^{\frac32}\sigma$\\
		$|\varphi|^2\varphi+|\varphi|^3\varphi+|\varphi|^4\varphi$ & $\frac12$ & $1$ & $\frac12$ & $\omega_0+\varepsilon\nu+\varepsilon^{\frac32}\sigma$\\
		\bottomrule
	\end{tabular}
\end{table}

As explained in Sec. \ref{Section_SPPs}, in the applications of Theorem \ref{Theorem_BIF_NLeigv} to time harmonic electromagnetic waves, the relevant nonlinearities are odd. In the case of a cubic nonlinearity ($\alpha = 1/2$) the first correction term in $f$ is of the kind $|\varphi|^4\varphi$ and therefore we have $\beta=2$ such that we are in the optimal case $\tau=1$.
\end{remark}

	The strategy of the proof of Theorem \ref{Theorem_BIF_NLeigv} may be summarized as follows. We employ a Lyapunov-Schmidt reduction making use of spectral projections (here the simplicity of the eigenvalue $\omega_0$ is used) to decompose the problem into a system in which one rather easily determines $\nu$ and $\phi$. Then, the rest becomes a system for the unknowns $\sigma$ and $\psi$, which will be solved by means of a nested fixed-point argument. In particular, the assumption that $\omega_0$ be isolated is exploited to invert the operator $L(\cdot,\omega_0)$ restricted to $\langle\varphi_0^*\rangle^\perp$.

\begin{proof}[Proof of Theorem \ref{Theorem_BIF_NLeigv}]
$ $\newline
\textbf{Lyapunov-Schmidt decomposition.}

In this initial step we reformulate problem \eqref{eq} with the ansatz in \eqref{omega_varphi} as a system of two equations using the Lyapunov-Schmidt decomposition.

Let us first introduce the projections $P_0: u\mapsto\langle u,\varphi_0^*\rangle\varphi_0$ and $Q_0:=Id-P_0$. Clearly, $P_0:L^2(\R^d,\C)\to \langle \varphi_0\rangle$ and $Q_0:L^2(\R^d,\C)\to \langle \varphi_0^*\rangle^\perp$. Using our constraint $\langle \varphi,\varphi_0^*\rangle = \eps^\alpha$, it is easy to see that $P_0(\phi+\varepsilon^\tau\psi)=0$. Applying then $P_0$ to our equation \eqref{eq}, we get
	\begin{equation}\label{proj_P0_1}
	\begin{split}
	\langle L(\cdot,\omega)\varphi,\varphi_0^*\rangle&=\langle f(\cdot,\omega,\varphi),\varphi_0^*\rangle\\
	&=\langle f(\cdot,\omega,\varphi)-\varepsilon^{\alpha+1}g_{\omega_0}(\cdot)|\varphi_0|^\frac1\alpha\varphi_0,\varphi_0^*\rangle+\varepsilon^{\alpha+1}\langle g_{\omega_0}(\cdot)|\varphi_0|^\frac1\alpha\varphi_0,\varphi_0^*\rangle.
	\end{split}
	\end{equation}
	Notice that the first term is of higher-order in $\varepsilon$, mainly because of our assumption (f4), see the forthcoming computations \eqref{4_sys_I_split}-\eqref{4_sys_I_split_3}.

	On the other hand, Taylor expanding $W(\cdot,\omega)$ in $\omega_0$ up to order two and using assumption (W1), we have
	\begin{equation*}
	\begin{split}
	\langle L(\cdot,\omega)\varphi,\varphi_0^*\rangle&=\langle L(\cdot,\omega_0)\varphi,\varphi_0^*\rangle+\langle\left(L(\cdot,\omega)-L(\cdot,\omega_0)\right)\varphi,\varphi_0^*\rangle\\
	&=\langle\varphi,L(\cdot,\omega_0)^*\varphi_0^*\rangle-\langle\left(W(\cdot,\omega)-W(\cdot,\omega_0)\right)\varphi,\varphi_0^*\rangle\\
	&=-\left\langle\left(\partial_\omega W(\cdot,\omega_0)(\omega-\omega_0)+\frac12\partial_\omega^2W(\cdot,\omega_0)(\omega-\omega_0)^2+I(\cdot, \omega)\right)\varphi,\varphi_0^*\right\rangle,
	\end{split}
	\end{equation*}
	where
	\begin{equation*}
	I(x,\omega):=\frac{(\omega-\omega_0)^3}{2\pi \ri}\int_{\pa B_r(\omega_0)}\frac{W(x,z)}{(z-\omega_0)^3(z-\omega)}dz
	\end{equation*}
	for any $r<\delta$, see \eqref{E:Taylor-exp-W}.
	Inserting now the expansions of $\omega$ and $\varphi$ from \eqref{omega_varphi}, i.e.
	\begin{equation*}%\label{omega_varphi}
	\omega=\omega_0+\varepsilon\nu+\varepsilon^{1+\tau}\sigma\qquad\mbox{and}\qquad\varphi=\varepsilon^\alpha\varphi_0+\varepsilon^{\alpha+1}\phi+\varepsilon^{\alpha+1+\tau}\psi,
	\end{equation*}
	we obtain
	\begin{equation}\label{proj_P0_2}
	\begin{split}
	-\langle L(\cdot,\omega)\varphi,\varphi_0^*\rangle&=\varepsilon^{\alpha+1}\nu\langle\partial_\omega W(\cdot,\omega_0)\varphi_0,\varphi_0^*\rangle+\varepsilon^{\alpha+1+\tau}\sigma\langle\partial_\omega W(\cdot,\omega_0)\varphi_0),\varphi_0^*\rangle\\
	&\quad+\varepsilon^{\alpha+2}\bigg\langle\bigg(\nu\partial_\omega W(\cdot,\omega_0)\phi+\frac{\nu^2}2\partial_\omega^2W(\cdot,\omega_0)\varphi_0\bigg),\varphi_0^*\bigg\rangle+\langle v(\cdot,\omega,\varphi),\varphi_0^*\rangle,
	\end{split}
	\end{equation}
	where $v$ collects all other terms of higher-order in $\varepsilon$, namely
	\begin{equation}\label{v}
	\begin{split}
	v(\cdot,\omega,\varphi):&=\varepsilon^{\alpha+2+\tau}\sigma\partial_\omega W(\cdot,\omega_0)\phi+\varepsilon^{\alpha+2+\tau}(\nu+\varepsilon^\tau\sigma)\partial_\omega W(\cdot,\omega_0)\psi+\varepsilon^{\alpha+3}\frac{\nu^2}2\partial_\omega^2W(\cdot,\omega_0)(\phi+\varepsilon^\tau\psi)\\
	&\quad+\varepsilon^{\alpha+2+\tau}\frac12\partial_\omega^2W(\cdot,\omega_0)(2\nu\sigma+\varepsilon^\tau\sigma^2)(\varphi_0+\varepsilon\phi+\varepsilon^{1+\tau}\psi)+\varepsilon^\alpha I(\cdot,\omega)(\varphi_0+\varepsilon\phi+\varepsilon^{1+\tau}\psi).
	\end{split}
	\end{equation}	
	Comparing now \eqref{proj_P0_1} and \eqref{proj_P0_2}, the terms of order $\alpha+1$ in $\varepsilon$ match if and only if we take
	\begin{equation}\label{nu_GEN}
	\nu:=-\frac{\langle g_{\omega_0}(\cdot)|\varphi_0|^\frac1\alpha\varphi_0,\varphi_0^*\rangle}{\langle\partial_\omega W(\cdot,\omega_0)\varphi_0,\varphi_0^*\rangle},
	\end{equation}
	which is well-defined thanks to assumption (Wt). From the rest of \eqref{proj_P0_1}-\eqref{proj_P0_2} we obtain
	\begin{equation}\label{3_sys}
	\begin{split}
	\varepsilon^{\alpha+1+\tau}\sigma\langle\partial_\omega W(\cdot,\omega_0)\varphi_0,\varphi_0^*\rangle=&-\varepsilon^{\alpha+2}\Big\langle\left(\nu\partial_\omega W(\cdot,\omega_0)\phi+\frac{\nu^2}2\partial_\omega^2W(\cdot,\omega_0)\varphi_0\right),\varphi_0^*\Big\rangle\\
	&-\langle v(\cdot,\omega,\varphi),\varphi_0^*\rangle-\langle f(\cdot,\omega,\varphi)-\varepsilon^{\alpha+1}g_{\omega_0}(\cdot)|\varphi_0|^\frac1\alpha\varphi_0,\varphi_0^*\rangle.
	\end{split}
	\end{equation}
	Let us now apply $Q_0$ to \eqref{eq}.
	On the one hand we have
	\begin{equation*}
	Q_0L(\cdot,\omega)Q_0\varphi
	=-Q_0\big(W(\cdot,\omega)-W(\cdot,\omega_0)\big)(\varepsilon^{\alpha+1}\phi+\varepsilon^{\alpha+1+\tau}\psi)+Q_0L(\cdot,\omega_0)(\varepsilon^{\alpha+1}\phi+\varepsilon^{\alpha+1+\tau}\psi)
	\end{equation*}
	and on the other hand
	\begin{equation*}
	\begin{split}
	Q_0L(\cdot,\omega)Q_0\varphi&=Q_0L(\cdot,\omega)\varphi-\varepsilon^\alpha Q_0\big(L(\cdot,\omega)-L(\cdot,\omega_0)\big)\varphi_0-\varepsilon^\alpha Q_0L(\cdot,\omega_0)\varphi_0\\
	&=Q_0f(\cdot,\omega,\varphi)+Q_0\big(W(\cdot,\omega)-W(\cdot,\omega_0)\big)(\varepsilon^\alpha\varphi_0).
	\end{split}
	\end{equation*}
	Therefore, we obtain
	\begin{equation*}
	Q_0L(\cdot,\omega_0)Q_0(\varepsilon^{\alpha+1}\phi+\varepsilon^{\alpha+1+\tau}\psi)=Q_0f(\cdot,\omega,\varphi)+\varepsilon^\alpha Q_0\big(W(\cdot,\omega)-W(\cdot,\omega_0)\big)(\varphi_0+\varepsilon\phi+\varepsilon^{1+\tau}\psi).
	\end{equation*}
	An expansion of the last term as in \eqref{proj_P0_2}-\eqref{v} yields
	\begin{equation}\label{proj_Q0_all}
	\begin{split}
	\varepsilon^{\alpha+1}Q_0L(\cdot&,\omega_0)Q_0\phi+\varepsilon^{\alpha+1+\tau}Q_0L(\cdot,\omega_0)Q_0\psi=\varepsilon^{\alpha+1}Q_0\big(g_{\omega_0}(\cdot)|\varphi_0|^\frac1\alpha\varphi_0\big)+Q_0\big(f(\cdot,\omega,\varphi)\\
	&-\varepsilon^{\alpha+1}g_{\omega_0}(\cdot)|\varphi_0|^\frac1\alpha\varphi_0\big)+Q_0\bigg(\varepsilon^{\alpha+1}\nu\partial_\omega W(\cdot,\omega_0)\varphi_0 +\varepsilon^{\alpha+2}\bigg(\frac{\nu^2}2\partial_\omega^2W(\cdot,\omega_0)\varphi_0\\
	&+\nu\partial_\omega W(\cdot,\omega_0)\phi\bigg)+\varepsilon^{\alpha+1+\tau}\sigma\partial_\omega W(\cdot,\omega_0)\varphi_0+v(\cdot,\omega,\varphi)\bigg).
	\end{split}
	\end{equation}
	Rewriting \eqref{nu_GEN} as
	\begin{equation*}
	\begin{split}
	g_{\omega_0}(\cdot)|\varphi_0|^\frac1\alpha\varphi_0-\langle g_{\omega_0}(\cdot)|\varphi_0|^\frac1\alpha\varphi_0,\varphi_0^*\rangle\varphi_0=g_{\omega_0}(\cdot)|\varphi_0|^\frac1\alpha\varphi_0+\nu\langle\partial_\omega W(\cdot,\omega_0)\varphi_0,\varphi_0^*\rangle\varphi_0
	\end{split}
	\end{equation*}
	and using \eqref{proj_Q0_all}, we obtain
	\begin{equation}\label{proj_Q0_all_better}
	\begin{split}
	\varepsilon^{\alpha+1}Q_0L(\cdot,\omega_0)Q_0\phi&+\varepsilon^{\alpha+1+\tau}Q_0L(\cdot,\omega_0)Q_0\psi=\varepsilon^{\alpha+1}\big(g_{\omega_0}(\cdot)|\varphi_0|^\frac1\alpha\varphi_0+\nu\partial_\omega W(\cdot,\omega_0)\varphi_0\big)\\
	&+Q_0\big(f(\cdot,\omega,\varphi)-\varepsilon^{\alpha+1}g_{\omega_0}(\cdot)|\varphi_0|^\frac1\alpha\varphi_0\big)+\varepsilon^{\alpha+1+\tau}\sigma Q_0\big(\partial_\omega W(\cdot,\omega_0)\varphi_0\big)\\
	&+\varepsilon^{\alpha+2}Q_0\bigg(\frac{\nu^2}2\partial_\omega^2W(\cdot,\omega_0)\varphi_0+\nu\partial_\omega W(\cdot,\omega_0)\phi\bigg)+Q_0\big(v(\cdot,\omega,\varphi)\big).
	\end{split}
	\end{equation}
	
	\noindent Again, imposing that the terms of the lowest-order in $\varepsilon$ match, we get a linear equation for $\phi$:
	\begin{equation}\label{phi_GEN}
	Q_0L(\cdot,\omega_0)Q_0\phi=g_{\omega_0}(\cdot)|\varphi_0|^\frac1\alpha\varphi_0+\nu\partial_\omega W(\cdot,\omega_0)\varphi_0.
	\end{equation}
	Notice that, with our choice of $\nu$, equation \eqref{phi_GEN} is uniquely solvable in $Q_0D(A)=D(A)\cap\langle\varphi_0^*\rangle^\perp$ by the closed range theorem. Indeed, the operator on the left hand side is Fredholm (see \cite[Theorem IV.5.28]{Kato}, where the fact that $0$ is a simple isolated eigenvalue of $L(\cdot,\omega_0)$, see (E1)-(E2), is used) and the right hand side is orthogonal to the kernel of the adjoint operator, i.e. to $\varphi_0^*$, due to \eqref{nu_GEN}.
	\vskip0.2truecm
	\noindent The rest of \eqref{proj_Q0_all_better} produces the following equation for $(\sigma,\psi)$:
	\begin{equation}\label{4_sys}
	\begin{split}
	\varepsilon^{\alpha+1+\tau}Q_0L(\cdot,\omega_0)Q_0\psi&=Q_0\big(f(\cdot,\omega,\varphi)-\varepsilon^{\alpha+1}g_{\omega_0}(\cdot)|\varphi_0|^\frac1\alpha\varphi_0\big)+\varepsilon^{\alpha+1+\tau}\sigma Q_0(\partial_\omega W(\cdot,\omega_0)\varphi_0)\\
	&\quad+\varepsilon^{\alpha+2}Q_0\bigg(\frac{\nu^2}2\partial_\omega^2W(\cdot,\omega_0)\varphi_0+\nu\partial_\omega W(\cdot,\omega_0)\phi\bigg)+Q_0(v(\cdot,\omega,\varphi))\\
	&=:R_1+\varepsilon^{\alpha+1+\tau}R_2+\varepsilon^{\alpha+2}R_3+R_4=:R(\sigma,\psi).
	\end{split}
	\end{equation}
	
 \noindent	
\textbf{Fixed Point Argument.}
	
	In order to solve our initial problem \eqref{eq}, we now need to solve system \eqref{3_sys}, \eqref{4_sys} for $(\sigma,\psi)\in\C\times D(A)$. Inserting then $\sigma$ and $\psi$ into \eqref{omega_varphi} produces a solution $(\omega,\varphi)$ of \eqref{eq}.
	
	We proceed by a fixed point argument. Note that although a direct fixed point argument for $(\sigma,\psi)$ is possible, we opt for a nested version, where we first solve for $\psi$ as a function of $\sigma$ and subsequently solve for $\sigma$. This approach is arguably more transparent.
	
	\vskip0.2truecm
	Let us first address equation \eqref{4_sys} and write it as a fixed point equation for $\psi$, exploiting our assumptions on the eigenvalue $\omega_0$, which is assumed simple and isolated. This actually means that $Q_0L(\omega_0)Q_0:Q_0D(A)\to Q_0L^2(\R^d)$ is boundedly invertible in $Q_0D(A)$ and its norm is bounded by a constant $C(\omega_0)$:
	\begin{equation}\label{4_sys_PTOF}
\begin{split}
	&\psi=\eps^{-(\alpha+1+\tau)}\big[Q_0L(\omega_0)Q_0\big]^{-1}R(\sigma,\psi):=G(\sigma,\psi),\\ 
	&\mbox{with}\qquad\|G(\sigma,\psi)\|_A\leq C(\omega_0)\eps^{-(\alpha+1+\tau)}\|R(\sigma,\psi)\|.
\end{split}
	\end{equation}
	In our nested fixed point argument for \eqref{3_sys}, \eqref{4_sys_PTOF} we first solve \eqref{4_sys_PTOF} for $\psi\in B_{r_2}(0)\subset D(A)$ for all $\sigma\in B_{r_1}(0)\subset\C$ fixed with $r_1>0$ arbitrary. To this aim we need to show that for each $\sigma\in B_{r_1}(0)$ there exists $r_2>0$ so that\footnotemark\footnotetext{With a little abuse of notation, henceforth we write $G(\psi):=G(\sigma,\psi)$ and similarly for $R$ when $\sigma$ is assumed to be fixed.}
	\begin{enumerate}
		\item[(\textit{i})] $\psi\in B_{r_2}(0)$ $\Rightarrow$ $G(\psi)\in B_{r_2}(0)$,
		\item[(\textit{ii})] $\exists \rho\in(0,1):\ \|G(\psi_1)-G(\psi_2)\|_A\leq\rho\|\psi_1-\psi_2\|_A$ for all $\psi_1,\psi_2\in B_{r_2}(0)$
	\end{enumerate}
	if $\varepsilon >0$ is small enough. 
	
	Then, having obtained $\psi=\psi(\sigma)\in B_{r_2}(0)$, we shall solve equation \eqref{3_sys} for $\sigma$ and finally find a suitable $r_1$. Note that the fixed point argument for equation \eqref{3_sys} requires the Lipschitz continuity of $\sigma\mapsto\psi(\sigma)$, which we verify below.

	To ensure (\textit{i}), we need to estimate $\|R(\psi)\|$. The second and the third term in \eqref{4_sys} are easy to handle. Henceforth, we track the dependence of all constants on $\sigma$ and $\psi$ via $r_1,r_2$.
	\begin{equation}\label{4_sys_II}
	\|R_2\|\leq|\sigma|\|\partial_\omega W(\cdot,\omega_0)\|_\infty\leq Cr_1,
	\end{equation}
	\begin{equation}\label{4_sys_III}
	\|R_3\|\leq\max\left\{\tfrac{|\nu|^2}2,\nu\|\phi\|\right\}\left(\|\partial_\omega W(\cdot,\omega_0)\|_\infty+\|\partial^2_\omega W(\cdot,\omega_0)\|_\infty\right)\leq C
	\end{equation}
using \eqref{W-der-est}.	Let us now deal with $R_4$. Inspecting \eqref{v}, we obtain
	\begin{equation}\label{4_sys_IV}
	\begin{split}
	\|R_4\|&\leq \|v(\cdot,\omega,\varphi)\|\leq C\left[\varepsilon^{\alpha+2+\tau}(r_1+(1+\varepsilon^\tau r_1)\|\psi\|)+\varepsilon^{\alpha+3}(1+\varepsilon^\tau\|\psi\|)\right.\\
	&\left.\quad+\varepsilon^{\alpha+2+\tau}r_1(1+\varepsilon^\tau r_1)(1+\varepsilon^{1+\tau}\|\psi\|)+\varepsilon^{\alpha+3}(1+\eps^\tau r_1)(1+\varepsilon^{1+\tau}\|\psi\|)\right]\\
	&\leq C\left[\varepsilon^{\alpha+2+\tau}(r_1+\|\psi\|)+\varepsilon^{\alpha+3}\right]+\varepsilon^{\alpha+2+2\tau} h_1(r_1,\|\psi\|),
	\end{split}
	\end{equation}
	where $h_1$ is polynomial in $r_1$, linear in $\|\psi\|$ and satisfies $h_1(0,\|\psi\|)=0.$
	Actually, all terms appearing in \eqref{v} are easy to estimate, so here we just briefly justify the one for the integral rest $I(x,\omega)$, for later use too. Indeed, using \eqref{E:Tn-est} for $n=3$, we 
	have 
	\begin{equation}\label{I_omega_estimate}
	\begin{split}
	\|I(\cdot,\omega)\|_\infty&\leq \frac{2M}{r^3}|\omega-\omega_0|^3=\frac{2M}{r^3}\varepsilon^3|\nu+\varepsilon^\tau\sigma|^3\leq \frac{2MC}{r^3}\varepsilon^3(1+\varepsilon^\tau r_1)
	\end{split}
	\end{equation}
	for any $0<r<\delta$ and all $\omega\in B_{r/2}(\omega_0)$, i.e. for all $\eps>0$ small enough. Recall that $M=\sup\limits_{\omega\in B_\delta(\omega_0)}\|W(\cdot,\omega)\|_\infty.$
	Finally, we need to estimate $R_1$, which involves the nonlinearity. We split it as
	\begin{align}\label{4_sys_I_split}
	f(\cdot,\omega,\varphi)-\varepsilon^{\alpha+1}g_{\omega_0}(\cdot)|\varphi_0|^{\frac1\alpha}\varphi_0&=\left(f(\cdot,\omega,\varphi)-g_\omega(\cdot)|\varphi|^{\frac1\alpha}\varphi\right)+g_\omega(\cdot)\!\left(|\varphi|^{\frac1\alpha}\varphi-\varepsilon^{\alpha+1}|\varphi_0|^{\frac1\alpha}\varphi_0\right)\notag\\
	&\quad+\varepsilon^{\alpha+1}\left(g_\omega(\cdot)-g_{\omega_0}(\cdot)\right)|\varphi_0|^{\frac1\alpha}\varphi_0
	\end{align}
	and estimate term by term. First, by (f4) one gets
	\begin{equation}\label{4_sys_I_split_1}
	\begin{split}
	\|f(\cdot,\omega,\varphi)-g_\omega|\varphi|^{\frac1\alpha}\varphi\|&\leq C(\omega_0)\||\varphi|^{\frac1\alpha+1+\beta}\|\leq C\|\varphi\|_\infty^{\frac1\alpha+\beta}\|\varphi\|\leq C\|\varphi\|_A^{\frac1\alpha+1+\beta}\\
	&\leq C\eps^{\alpha+1+\alpha\beta}\|\varphi_0+\varepsilon\phi+\varepsilon^{1+\tau}\psi\|_A^{\frac1\alpha+1+\beta}\\
	&\leq C\eps^{\alpha+1+\alpha\beta}(1+\varepsilon^{1+\tau}r_2)^{\tfrac{1}{\alpha}+\beta}(1+\varepsilon^{1+\tau}\|\psi\|_A)\\
	&\leq C_1(r_2)\eps^{\alpha+1+\alpha\beta}(1+\varepsilon^{1+\tau}\|\psi\|_A)	
	\end{split}
	\end{equation}
	for $\eps>0$ small enough. In equation \eqref{4_sys_I_split_1} we used the embedding $D(A)\hookrightarrow L^\infty(\R^d)$. The dependence $r_2\mapsto C_1(r_2)$ is of power type; in detail, 
	$$C_1(r_2)=C_0(1+\eps^{1+\tau}r_2)^{1/\alpha+\beta}=C_0\left(1+\left(\tfrac1\alpha+\beta\right)\eps^{1+\tau}r_2 (1+\eps^{1+\tau}r_*)^{1/\alpha+\beta-1}\right)$$
	for some $r_*\in [0,r_2]$. For each $r_2>0$ there exists $\eps_0=\eps_0(r_2)$ such that $$(\tfrac{1}{\alpha}+\beta)\eps^{1+\tau}r_2 (1+\eps^{1+\tau}r_*)^{1/\alpha+\beta-1}\leq 1$$ 
	for all $\eps\in (0,\eps_0).$
	In conclusion
	\begin{equation}\label{E:C1_est}
	C_1(r_2)\leq 2C_0=2C_1(0)
	\end{equation}
	for all $\eps\in (0,\eps_0).$
	
	Next,
	\begin{align}\label{4_sys_I_split_2}
	\left\|g_\omega\left(|\varphi|^{\frac1\alpha}\varphi-\varepsilon^{\alpha+1}|\varphi_0|^{\frac1\alpha}\varphi_0\right)\right\|&\leq\|g_\omega\|_\infty\varepsilon^{\alpha+1}\||\varphi_0+\varepsilon\phi+\varepsilon^{1+\tau}\psi|^{\frac1\alpha}(\varphi_0+\varepsilon\phi+\varepsilon^{1+\tau}\psi)-|\varphi_0|^{\frac1\alpha}\varphi_0\|\notag\\
	&\leq C\varepsilon^{\alpha+1}\|\varepsilon\phi+\varepsilon^{1+\tau}\psi\|\leq C\varepsilon^{\alpha+2}(1+\varepsilon^\tau\|\psi\|),
	\end{align}
	where we used the fact that the map $z\mapsto|z|^\frac1\alpha z$ is locally Lipschitz as well as estimates of the type $\||\varphi|^{1/\alpha}\phi\|\leq \|\varphi\|^{1/\alpha}_{\infty}\|\phi\|\leq c\|\varphi\|^{1/\alpha}_{D(A)}\|\phi\|=C$. Finally, since $g_\omega$ is Lipschitz in $\omega$ by (f4),
	\begin{equation}\label{4_sys_I_split_3}
	\begin{split}
	\varepsilon^{\alpha+1}\|\left(g_\omega-g_{\omega_0}\right)|\varphi_0|^{\frac1\alpha}\varphi_0\|&\leq \varepsilon^{\alpha+1}C\|g_\omega-g_{\omega_0}\|_\infty\leq \varepsilon^{\alpha+1}CK_g|\omega-\omega_0|\\
	&\leq C\varepsilon^{\alpha+2}|\nu+\varepsilon^\tau\sigma|\leq C\varepsilon^{\alpha+2}(1+\varepsilon^\tau r_1).
	\end{split}
	\end{equation}
	Hence from \eqref{4_sys_I_split}-\eqref{4_sys_I_split_3} we infer
	\begin{equation}\label{4_sys_I}
	\|R_1\|\leq C_1(r_2)\eps^{\alpha+1+\alpha\beta}(1+\eps^{1+\tau}\|\psi\|_A)+C\eps^{\alpha+2}(1+\eps^{\tau}(r_1+\|\psi\|))
	\end{equation}
	and therefore, from \eqref{4_sys_II}-\eqref{4_sys_IV} and \eqref{4_sys_I}, that
	\begin{equation}\label{bound_G}
	\begin{split}
	\|G(\psi)\|_A&\leq C\varepsilon^{-(\alpha+1+\tau)}\|R(\psi)\|\\
	&\leq C_1(r_2)\eps^{\alpha\beta-\tau}(1+\varepsilon^{1+\tau}\|\psi\|_A)+C(\varepsilon^{1-\tau}+\eps\|\psi\|+r_1\big)+\varepsilon^{1+\tau}h_1(r_1,\|\psi\|).
	\end{split}
	\end{equation}
Using \eqref{E:C1_est} and $\tau\leq \alpha\beta$, we may further estimate
	\begin{equation*}
	\begin{split}
	\|G(\psi)\|_A\leq &2 C_0 (1+\eps^{1+\tau} r_2) + C(\eps^{1-\tau}+\eps r_2+r_1)+\eps^{1+\tau}h_1(r_1,r_2)\\
	\leq &4C_0 + \tilde{C}r_1
	\end{split}
	\end{equation*}
for $\eps>0$ small enough. This follows because for $\eps$ small enough we have $\eps^{1+\tau}h_1(r_1,r_2) <C_0$. Setting now $r_2=r_2(r_1) := 4C_0 + \tilde{C}r_1$, we get (\textit{i}) provided $\eps\in (0,\overline{\eps}_0)$ with some $\overline{\eps}_0=\overline{\eps}_0(r_1)>0.$

	Let us now address the contraction property (\textit{ii}). We take $\psi_1,\psi_2\in B_{r_2}(0)$ and define $\varphi_{1,2}:=\varepsilon^\alpha(\varphi_0+\varepsilon\phi+\varepsilon^{1+\tau}\psi_{1,2})$. By \eqref{4_sys_PTOF}, we need to estimate $\|R(\psi_1)-R(\psi_2)\|$, with $R$ defined in \eqref{4_sys}. First notice that $R_2$ and $R_3$ do not depend on $\psi$, so they will vanish in the difference and we have to handle just the nonlinear term and $v$.
	\begin{equation}\label{v_Lip_psi}
	\begin{split}
	\|v(\cdot,\omega,\varphi_1)-v(\cdot,\omega,\varphi_2)\|&\leq\varepsilon^{\alpha+2+\tau}C(1+\varepsilon^\tau r_1)\|\psi_1-\psi_2\|+\varepsilon^{\alpha+3+\tau}\|\psi_1-\psi_2\|\\
	&\quad+C\varepsilon^{\alpha+3+2\tau}(r_1+\varepsilon^\tau r_1^2)\|\psi_1-\psi_2\| +C\varepsilon^{\alpha+4+\tau}(1+\varepsilon^\tau r_1)\|\psi_1-\psi_2\|\\
	&\leq\varepsilon^{\alpha+2+\tau}C_2(r_1)\|\psi_1-\psi_2\|,
	\end{split}
	\end{equation}	
	where $C_2$ is polynomial in $r_1$.	Here, estimate \eqref{I_omega_estimate} was used. Next, by (f3),
	\begin{equation}\label{f_Lip_psi}
	\begin{split}
	\|f(\cdot,\omega,\varphi_1)-f(\cdot,\omega,\varphi_2)\|&=K_f^\varphi(\omega_0)\varepsilon^{\alpha+1}\|\varphi_0+\varepsilon\phi+\varepsilon^{1+\tau}\psi_1-(\varphi_0+\varepsilon\phi+\varepsilon^{1+\tau}\psi_2)\|\\
	&\leq K_f^\varphi(\omega_0)\varepsilon^{\alpha+2+\tau}\|\psi_1-\psi_2\|.
	\end{split}
	\end{equation}
	Hence, by \eqref{4_sys_PTOF}, \eqref{v_Lip_psi} and \eqref{f_Lip_psi}, we get
	\begin{equation*}
	\|G(\psi_1)-G(\psi_2)\|_A\leq\varepsilon^{-(\alpha+1+\tau)}C\|R(\psi_1)-R(\psi_2)\|\leq\varepsilon \max\{K_f^\varphi(\omega_0),C_2(r_1)\}\|\psi_1-\psi_2\|,
	\end{equation*}
	which yields (\textit{ii}) if $\varepsilon$ is small enough. Therefore, applying Banach's fixed point theorem, we infer the existence of a solution $\psi$ of equation \eqref{4_sys}. More precisely, for any $r_1>0$ and for any $\sigma\in B_{r_1}(0)\subset\C$ there exists $r_2=r_2(r_1)$ and $\eps_0>0$ such that for each $\eps\in (0,\eps_0)$ there is a unique $\psi=\psi(\sigma)\in B_{r_2}(0)\subset D(A)$ such that $(\sigma,\psi)$ solves \eqref{4_sys}.
	\vskip0.2truecm
	Let us now address equation \eqref{3_sys}. Inserting $\psi=\psi(\sigma)$ and dividing by the factor $\varepsilon^{\alpha+1+\tau}\langle\partial_\omega W(\cdot,\omega_0)\varphi_0,\varphi_0^*\rangle$, which is nonzero by (Wt), this becomes a fixed point equation for $\sigma$:
	\begin{equation}\label{3_sys_PTO}\tag{\ref{3_sys}'}
	\begin{split}
	\sigma&=\left(\varepsilon^{\alpha+1+\tau}\langle\partial_\omega W(\cdot,\omega_0)\varphi_0,\varphi_0^*\rangle\right)^{-1}\!\bigg(-\varepsilon^{\alpha+2}\Big\langle\left(\nu\partial_\omega W(\cdot,\omega_0)\phi+\frac{\nu^2}2\partial_\omega^2W(\cdot,\omega_0)\varphi_0\right),\varphi_0^*\Big\rangle\\
	&\quad-\langle v(\cdot,\omega,\varphi(\sigma)),\varphi_0^*\rangle-\langle f(\cdot,\omega,\varphi(\sigma))-\varepsilon^{\alpha+1}g_{\omega_0}(\cdot)|\varphi_0|^\frac1\alpha\varphi_0,\varphi_0^*\rangle\bigg)\\
	&=: S(\sigma),
	\end{split}
	\end{equation}
	where $\varphi(\sigma)$ is given by \eqref{omega_varphi} with $\psi=\psi(\sigma).$
	We need to show that for some $r_1>0$
	\begin{enumerate}
		\item[(\textit{i'})] $S:B_{r_1}(0)\to B_{r_1}(0)$,
		\item[(\textit{ii'})] there exists $\rho\in(0,1)$ so that $|S(\sigma_1)-S(\sigma_2)|\leq\rho |\sigma_1-\sigma_2|$ for all $\sigma_1,\sigma_2\in B_{r_1}(0)$
	\end{enumerate}
	if $\varepsilon>0$ is small enough.
	
	Notice that the first term on the right-hand side of (\ref{3_sys}') is independent of $\sigma$ and $\psi$, therefore its norm can be simply estimated by a constant. Decomposing the nonlinear term as in \eqref{4_sys_I_split}, according to estimates \eqref{4_sys_I_split_1}-\eqref{4_sys_I_split_3}, we get
	\begin{equation}\label{3_sys_PTO_NL}
	\|f(\cdot,\omega,\varphi)-\varepsilon^{\alpha+1}g_{\omega_0}(\cdot)|\varphi_0|^{\frac1\alpha}\varphi_0\|\leq C_1(r_2)\varepsilon^{\alpha+1+\alpha\beta}(1+\varepsilon^{1+\tau}r_2)+C\varepsilon^{\alpha+2}(1+\varepsilon^\tau(|\sigma|+r_2)).
	\end{equation}
	Moreover, similarly to \eqref{4_sys_IV} we have
	\begin{equation}\label{3_sys_PTO_L}
	\begin{split}
	\|v(\cdot,\omega,\varphi)\|&\leq\varepsilon^{\alpha+2+\tau}C|\sigma|+\varepsilon^{\alpha+2+\tau}(1+\varepsilon^\tau|\sigma|)r_2+C\varepsilon^{\alpha+3+\tau}r_2\\
	&\quad+\varepsilon^{\alpha+2+\tau}C(|\sigma|+\varepsilon^\tau|\sigma|^2)(1+\varepsilon^{1+\tau}r_2)+\varepsilon^{\alpha+3}C(1+\varepsilon^\tau|\sigma|)(1+\varepsilon^{1+\tau}r_2)\\
	&\leq C\left(\varepsilon^{\alpha+2+\tau}(|\sigma|+r_2)+\varepsilon^{\alpha+3}\right)+\varepsilon^{\alpha+2+2\tau}h_1(|\sigma|,r_2).
	\end{split}
	\end{equation}
Therefore, combining \eqref{3_sys_PTO}, \eqref{3_sys_PTO_NL} and \eqref{3_sys_PTO_L}, we obtain
	\begin{equation*}
	\begin{split}
	|S(\sigma)|&\leq C_1(r_2)\varepsilon^{\alpha\beta-\tau}(1+\varepsilon^{1+\tau}r_2)+C\varepsilon^{1-\tau}(1+\varepsilon^\tau(|\sigma|+r_2))\\
	&\quad+C(\varepsilon(|\sigma|+r_2)+\varepsilon^{2-\tau})+\varepsilon^{1+\tau} h_1(|\sigma|,r_2).
	\end{split}
	\end{equation*}
	For each $r_1>0$ there is $\tilde{\eps}_0=\tilde{\eps}_0(r_1)>0$ such that
	$$|S(\sigma)|\leq \eps^{\min\{\alpha\beta-\tau,1-\tau\}}\left(2C_1(r_2)+2C\right)+1$$
	for all $|\sigma|<r_1$ and all $\eps\in (0,\tilde{\eps}_0)$. Recalling that $r_2=r_2(r_1)=4C_0+\tilde{C}r_1$ and that $\tau\leq\min\{1,\alpha\beta\}$, this implies that 
	$$|S(\sigma)|\leq 4C_0 + 2C+1\quad\;\text{for all } |\sigma|<r_1 \text{ and } \eps>0  \text{ small enough},$$
	i.e. $\eps\in (0,\eps_0)$ with $\eps_0=\eps_0(r_1).$ Setting $r_1:=4C_0+2C+1$, we have property (\textit{i}').

Finally, let us address the contraction property (\textit{ii}'). For $\sigma_{1,2}\in B_{r_1}(0)$  we define $\psi_{1,2}=\psi(\sigma_{1,2})$ and analogously $\omega_{1,2}$ and $\varphi_{1,2}$. We need to estimate $|S(\sigma_1)-S(\sigma_2)|$. Recalling the form of $v$ in \eqref{v}, 
	we get
	\begin{equation}\label{v_Lip_sigma}
	\begin{split}
	\|v(\cdot,\omega_1,\varphi_1)-v(\cdot,\omega_2,\varphi_2)\|&\leq\,C\bigg[\varepsilon^{\alpha+2+\tau}\big(|\sigma_1-\sigma_2|+\|\psi_1-\psi_2\|+\eps^\tau\|\sigma_1\psi_1-\sigma_2\psi_2\|\big)\\
	&\quad+\varepsilon^{\alpha+3+\tau}\|\psi_1-\psi_2\|+\varepsilon^{\alpha+2+\tau}\big(|\sigma_1^2-\sigma_2^2|+\varepsilon^{1+\tau}\|\sigma_1^2\psi_1-\sigma_2^2\psi_2\|\big)\\
	&\quad+\varepsilon^\alpha\|I(\cdot,\omega_1)-I(\cdot,\omega_2)\|_\infty+\varepsilon^{\alpha+1+\tau}\|I(\cdot,\omega_2)\|_\infty\|\psi_1-\psi_2\|\bigg]\\
	&\leq C_3(r_1)(1+r_2)\,\varepsilon^{\alpha+2+\tau}\big[|\sigma_1-\sigma_2|+\|\psi_1-\psi_2\|\big]
	\end{split}
	\end{equation}
	with $C_3(r_1)$ cubic in $r_1$. Here the estimates are rather standard and quite similar to the ones used in \eqref{3_sys_PTO_L}, so we just point out how to deal with the rest term $I$. We have, as in Remark \ref{R:Taylor}, 
	\begin{equation}\label{I_omega_CdV}
	\begin{split}
	I(\cdot,\omega_1)-I(\cdot,\omega_2)&=\frac1{2\pi\ri}\bigg((\omega_1-\omega_0)^3\int_{\pa B_r(\omega_0)}\frac{W(\cdot,z)}{(z-\omega_0)^3(z-\omega_1)}dz\\
	&\quad-(\omega_2-\omega_0)^3\int_{\pa B_r(\omega_0)}\frac{W(\cdot,z)}{(z-\omega_0)^3(z-\omega_2)}dz\bigg).
	\end{split}
	\end{equation}
	First, we estimate
	\begin{equation}\label{I_omega_stima_1}
	\begin{split}
	\frac{1}{2\pi}&\bigg\|\int_{\pa B_r(\omega_0)}\frac{W(\cdot,z)}{(z-\omega_0)^3(z-\omega_1)}dz\left((\omega_1-\omega_0)^3-(\omega_2-\omega_0)^3\right)\bigg\|_\infty\\
	&\leq \eps^3\frac{2M}{r^3}|(\nu+\eps^\tau \sigma_1)^3-(\nu+\eps^\tau \sigma_2)^3|\\
	&\leq \eps^3\frac{2M}{r^3}|\eps^\tau\nu^2(\sigma_1-\sigma_2)+3\eps^{2\tau}\nu(\sigma_1^2-\sigma_2^2) + \eps^{3\tau}(\sigma_1^3-\sigma_2^3)|\\
	&\leq C_4(r_1)\eps^{3+\tau}|\sigma_1-\sigma_2|,
	\end{split}
	\end{equation}
which holds for all $\eps>0$ small enough with $C_4(r_1)$ quadratic in $r_1$ using an estimate analogous to \eqref{I_omega_estimate}.	Second, we have
	\begin{align}
	&\frac{1}{2\pi}\left\|(\omega_2-\omega_0)^3\left(\int_{\pa B_r(\omega_0)}\frac{W(\cdot,z)}{(z-\omega_0)^3(z-\omega_1)}-\frac{W(\cdot,z)}{(z-\omega_0)^3(z-\omega_2)} dz\right)\right\|_\infty \nonumber\\
	&\quad \leq c\,\eps^3|\nu+\eps^\tau\sigma_2|^3\left\|\int_{\pa B_r(\omega_0)} \frac{W(\cdot,z)(\omega_1-\omega_2)}{(z-\omega_0)^3(z-\omega_1)(z-\omega_2)}dz \right\|_\infty \nonumber\\
	&\quad \leq \tilde{C}_5(r_1)\,\eps^{4+\tau}|\sigma_1-\sigma_2|\bigg\|\int_{\pa B_r(\omega_0)} \frac{W(\cdot,z)}{(z-\omega_0)^4(z-\omega_2)}dz \bigg\|_\infty \nonumber \\
	&\quad \leq C_5(r_1)\,\eps^{4+\tau}|\sigma_1-\sigma_2|\label{I_omega_stima_2}
	\end{align}
	for $\eps>0$ small enough, where in the second step we have used  $\omega_1-\omega_2=\eps^{1+\tau}(\sigma_1-\sigma_2)$ and estimated $|z-\omega_1|\geq \tfrac{1}{2}|z-\omega_0|$ for all $z\in \pa B_r(\omega_0)$, which holds for $\eps>0$ small enough. In the last step  estimate \eqref{E:Tn-est} was used again.
The constants $C_5(r_1),\tilde{C}_5(r_1)$ are cubic in $r_1$.
	Consequently, by \eqref{I_omega_CdV}-\eqref{I_omega_stima_2} we get
	$$\|I(\cdot,\omega_1)-I(\cdot,\omega_2)\|_\infty\leq\varepsilon^{3+\tau} C_6(r_1)|\sigma_1-\sigma_2|$$
	with $C_6(r_1)$ cubic in $r_1$.

	Now we have to deal with the third term in \eqref{3_sys_PTO} involving the nonlinearity. However, this is easily estimated using its Lipschitz behaviour in $\omega$ and $\varphi$ as in (f2)-(f3). Indeed,
	\begin{equation}\label{f_Lip_sigma}
	\begin{split}
	\|f(\cdot,\omega_1,\varphi_1)-f(\cdot,\omega_2,\varphi_2)\|&\leq\|f(\cdot,\omega_1,\varphi_1)-f(\cdot,\omega_1,\varphi_2)\|+\|f(\cdot,\omega_1,\varphi_2)-f(\cdot,\omega_2,\varphi_2)\|,
	\end{split}
	\end{equation}
	where the first term is estimated as in \eqref{f_Lip_psi}, whereas
	\begin{equation}\label{f_Lip_sigma_2}
	\|f(\cdot,\omega_1,\varphi_2)-f(\cdot,\omega_2,\varphi_2)\|
	\leq\varepsilon^{\alpha+1}K_f^\omega(\varphi_0)|\omega_1-\omega_2|=\varepsilon^{\alpha+2+\tau}K_f^\omega(\varphi_0)|\sigma_1-\sigma_2|.
	\end{equation}
	Therefore, combining \eqref{v_Lip_sigma} and \eqref{f_Lip_sigma}-\eqref{f_Lip_sigma_2}, we infer
	\begin{equation}\label{sigma_Lip_sigma}
	|S(\sigma_1)-S(\sigma_2)|\leq C_3(r_1)(1+r_2)\varepsilon(\|\psi_1-\psi_2\|+|\sigma_1-\sigma_2|)
	\end{equation}
	with $C_3(r_1)$ cubic in $r_1$.	Hence, it remains to show now that the map $\sigma\mapsto\psi(\sigma)$ is Lipschitz continuous. Taking $|\sigma_{1,2}|\leq r_1$, we shall estimate the difference $\|\psi_1-\psi_2\|$ starting from the fixed-point equation \eqref{4_sys_PTOF} for $\psi$, where as before we define $\psi_{1,2}:=\psi(\sigma_{1,2})$ and similarly for $\omega_{1,2}$ and $\varphi_{1,2}$. Indeed, exploiting the above estimates \eqref{v_Lip_sigma} and \eqref{f_Lip_sigma}-\eqref{f_Lip_sigma_2}, we have
	\begin{equation}\label{R_Lip_sigma}
	\begin{split}
	\|G(\sigma_1,\psi_1)-G(\sigma_2,\psi_2)\|&\leq C\varepsilon^{-(\alpha+1+\tau)}\big[\|f(\cdot,\omega_1,\varphi_1)-f(\cdot,\omega_2,\varphi_2)\|+C\varepsilon^{\alpha+1+\tau}|\sigma_1-\sigma_2|\\
	&\quad+\|v(\cdot,\sigma_1,\varphi_1)-v(\cdot,\sigma_2,\varphi_2)\|\big]\\
	&\leq C\left[(1+\eps K_f^\omega(\varphi_0))|\sigma_1-\sigma_2|+ C_2(r_1)(1+r_2)\eps(|\sigma_1-\sigma_2| + \|\psi_1-\psi_2\|)\right]\\
	&\leq C\left[2|\sigma_1-\sigma_2|+ C_2(r_1)(1+r_2)\eps\|\psi_1-\psi_2\|\right]
	\end{split}
	\end{equation}
	if $\eps>0$ is small enough.
	
	This, together with \eqref{4_sys_PTOF}, yields $\|\psi(\sigma_1)-\psi(\sigma_2)\|_A\leq 3C|\sigma_1-\sigma_2|$ for $\sigma_{1,2}\in B_{r_1}(0)$ and $\eps>0$ small enough. As a consequence we may conclude the fixed point argument for $\sigma$ because from \eqref{sigma_Lip_sigma} and \eqref{R_Lip_sigma} and from the fact that $r_2=r_2(r_1)$ we obtain
	\begin{equation*}
	|S(\sigma_1)-S(\sigma_2)|\leq \tilde{C}(r_1)\varepsilon|\sigma_1-\sigma_2|
	\end{equation*}
	with $\tilde{C}(r_1)>0$, which is the desired contraction property for suitably small values of $\varepsilon$. Hence, for small $\varepsilon>0$ the fixed point argument yields the sought solution for the system \eqref{3_sys}-\eqref{4_sys} and the proof of Theorem \ref{Theorem_BIF_NLeigv} is complete.	
\end{proof}

%-------------------------------------------------------
%-------------------------------------------------------

\section{Bifurcation of real nonlinear eigenvalues: the $\PT$-symmetric case}\label{S:PT}
In applications one often starts from a real eigenvalue of the linear problem and seeks a bifurcation branch $(\omega,\varphi)$ where the realness of the nonlinear eigenvalue is preserved, i.e. $\omega\in\R$. This is required also for the applications to SPPs in Sec. \ref{Section_SPPs}. However, the solution $\omega$ obtained by Theorem \ref{Theorem_BIF_NLeigv} is a-priori just complex. In this section we provide a symmetric situation in which the realness of $\omega$ is preserved in the bifurcation. This is based on \cite[Section III]{DS}, whereby we adapt the analysis for our more general context. For the sake of completeness and since Section \ref{Section_SPPs} is based on these results, we present most of the details here again.  
\begin{defn}\label{PTsymm}
	A function $\psi:\R^d\to\C$ ($d\geq1$) is called $\PT$\textit{-symmetric} if $\B\psi(x):=\overline{\psi(-x)}=\psi(x)$ for all $x\in\R^d$. Moreover, an operator $L$ acting on a Hilbert space with domain $D(L)$ is $\PT$\textit{-symmetric} when it commutes with $\B$, namely $L\B=\B L$ in $D(L)$.
\end{defn}
The above operator $\B$ is in fact the composition of the operator $\mathcal P$, the space reflection (parity), and $\mathcal T$, the complex conjugation, which corresponds to the time-reversal in quantum mechanics.

Notice that the Schr\"odinger operator $-\Delta+W$ with a complex potential $W$ is $\PT$-symmetric if and only if the real and the imaginary parts of $W$ satisfy respectively $\Real W(-x)=\Real W(x)$ and $\Imag W(-x)=-\Imag W(x)$ for all $x$. Moreover, general polynomial nonlinearities 
$$f(x,\varphi)=\sum_{p,q=0}^Na_{pq}(x)\varphi^p\overline{\varphi}^q$$
are $\PT$-symmetric if and only if the coefficients are so: $\overline{a_{pq}}(-x)=a_{pq}(x)$, see \cite[Sections III-IV]{DS}. An example is $f(\varphi)=|\varphi|^{2m+1}\varphi, m \in\N$. 

\begin{prop}\label{Thm_PT}
	Let $A$, $W$, $f$ and $g_\omega$ be as in Theorem \ref{Theorem_BIF_NLeigv} and suppose that they are $\PT$-symmetric. If $\omega_0\in\R$ is an algebraically simple eigenvalue, then for all $\varepsilon\in(0,\varepsilon_0)$ the nonlinear eigenpair $(\omega,\varphi)$ from Theorem \ref{Theorem_BIF_NLeigv} satisfies $\omega\in\R$ and $\B\varphi=\varphi$.
\end{prop}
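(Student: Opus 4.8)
The plan is to use the $\PT$-symmetry to manufacture, out of the branch produced by Theorem~\ref{Theorem_BIF_NLeigv}, a \emph{second} solution of the form $(\bar\omega,\B\varphi)$ satisfying the same constraint and lying in the same uniqueness ball, and then to invoke the uniqueness statement of that theorem to force $\bar\omega=\omega$ and $\B\varphi=\varphi$. First I would fix a $\PT$-symmetric normalization of the eigenfunctions. Since $\omega_0\in\R$ and $L(\cdot,\omega_0)\B=\B L(\cdot,\omega_0)$, the function $\B\varphi_0$ again lies in $\ker L(\cdot,\omega_0)$; by algebraic simplicity $\B\varphi_0=c\varphi_0$, and $\B^2=\Id$ forces $|c|=1$, so after multiplying $\varphi_0$ by a suitable phase I may assume $\B\varphi_0=\varphi_0$ while keeping $\|\varphi_0\|=1$ (note $\B$ is an $L^2$-isometry and commutes with $A$, hence is isometric also in the graph norm). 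The same argument applies to $\varphi_0^*$, because $L(\cdot,\omega_0)^*$ is again $\PT$-symmetric: from the antilinear unitarity $\langle\B u,\B v\rangle=\overline{\langle u,v\rangle}$ one checks $\B L(\cdot,\omega_0)^*\B=L(\cdot,\omega_0)^*$. Finally $\langle\varphi_0,\varphi_0^*\rangle=\langle\B\varphi_0,\B\varphi_0^*\rangle=\overline{\langle\varphi_0,\varphi_0^*\rangle}$ shows the pairing is real, so the normalization $\langle\varphi_0,\varphi_0^*\rangle=1$ is compatible with $\B\varphi_0^*=\varphi_0^*$.

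Next I would establish the \emph{conjugate symmetry} of the equation: if $(\omega,\varphi)$ solves \eqref{eq}, then so does $(\bar\omega,\B\varphi)$. The holomorphy of $W(\cdot,\cdot)$ together with the $\PT$-symmetry on the real $\omega$-axis gives, by the identity theorem, the relation $\overline{W(-x,\bar\omega)}=W(x,\omega)$ for complex $\omega$ near $\omega_0$; hence $\B\big(W(\cdot,\omega)\varphi\big)=W(\cdot,\bar\omega)\B\varphi$, and since $A\B=\B A$ we get $\B L(\cdot,\omega)\varphi=L(\cdot,\bar\omega)\B\varphi$. The $\PT$-symmetry of $f$ (and of $g_\omega$) yields the companion relation $\B f(\cdot,\omega,\varphi)=f(\cdot,\bar\omega,\B\varphi)$; applying $\B$ to \eqref{eq} then gives $L(\cdot,\bar\omega)\B\varphi=f(\cdot,\bar\omega,\B\varphi)$, as claimed. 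Moreover the constraint is preserved, since $\langle\B\varphi,\varphi_0^*\rangle=\langle\B\varphi,\B\varphi_0^*\rangle=\overline{\langle\varphi,\varphi_0^*\rangle}=\overline{\eps^\alpha}=\eps^\alpha$.

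It then remains to verify that $(\bar\omega,\B\varphi)$ has exactly the ansatz form \eqref{omega_varphi} with parameters inside the uniqueness ball. Differentiating the relation $\overline{W(-x,\bar\omega)}=W(x,\omega)$ in $\omega$ shows that $\partial_\omega W(\cdot,\omega_0)$ is $\PT$-symmetric, and together with the $\PT$-symmetry of $g_{\omega_0}$ and $\varphi_0$ this makes both the numerator and the denominator in \eqref{nu_GEN} real, so that $\nu\in\R$. Since $Q_0$ commutes with $\B$ (a one-line computation from $\B P_0=P_0\B$) and the right-hand side of \eqref{phi_GEN} is $\PT$-symmetric, uniqueness of the solution of \eqref{phi_GEN} in $Q_0D(A)$ forces $\B\phi=\phi$. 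Consequently $\bar\omega=\omega_0+\eps\nu+\eps^{1+\tau}\bar\sigma$ and $\B\varphi=\eps^\alpha\varphi_0+\eps^{\alpha+1}\phi+\eps^{\alpha+1+\tau}\B\psi$, i.e. $(\bar\omega,\B\varphi)$ is precisely the ansatz \eqref{omega_varphi} with $(\sigma,\psi)$ replaced by $(\bar\sigma,\B\psi)$; as $\B$ preserves $\langle\varphi_0^*\rangle^\perp$ and is isometric in the graph norm, one has $(\bar\sigma,\B\psi)\in B_{r_1}(0)\times B_{r_2}(0)$. By the uniqueness in Theorem~\ref{Theorem_BIF_NLeigv} I conclude $(\bar\omega,\B\varphi)=(\omega,\varphi)$, that is $\omega\in\R$ and $\B\varphi=\varphi$.

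The main obstacle is of a bookkeeping nature rather than a conceptual one: transferring the $\PT$-symmetry hypotheses—most naturally stated for real $\omega$—to complex $\omega$ via analyticity for $W$ (and an appropriate reflected–conjugate convention for $f$ and $g_\omega$), and checking that all auxiliary objects ($\nu$, $\phi$, the projections $P_0,Q_0$, the constraint, and the graph-norm ball) are compatible with $\B$. None of these steps is deep, but the realness of $\nu$ and the $\PT$-symmetry of $\phi$ are genuinely essential: without them $(\bar\omega,\B\varphi)$ would fail to match the ansatz \eqref{omega_varphi}, and the uniqueness of Theorem~\ref{Theorem_BIF_NLeigv} could not be invoked. (Equivalently, one could re-run the nested fixed-point argument restricted to the real-linear $\PT$-symmetric subspace $\{u:\B u=u\}$, checking that $G$ and $S$ map it into itself; the symmetry-plus-uniqueness route above is cleaner since it reuses Theorem~\ref{Theorem_BIF_NLeigv} as a black box.)
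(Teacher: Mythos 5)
Your argument is correct, but it takes a genuinely different route from the paper. The paper re-runs the nested fixed-point iteration inside the real-linear subspace $\{\eta:\B\eta=\eta\}\times\R$: after establishing $\nu\in\R$ and $\B\phi=\phi$ exactly as you do, it shows that for real $\sigma$ the map $G(\sigma,\cdot)$ preserves $\PT$-symmetry of $\psi$ and that $S$ maps real $\sigma$ to real values, so the unique fixed point already lies in the symmetric subspace. You instead use the conjugation symmetry of the full equation to manufacture the competitor solution $(\bar\omega,\B\varphi)$, verify that it has the ansatz form \eqref{omega_varphi} with parameters $(\bar\sigma,\B\psi)$ in the same ball $B_{r_1}(0)\times B_{r_2}(0)$ (using that $\B$ is a graph-norm isometry preserving $\langle\varphi_0^*\rangle^\perp$ and the constraint), and invoke the local uniqueness of the fixed point. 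This is cleaner in that it uses Theorem \ref{Theorem_BIF_NLeigv} as a black box, and you correctly identify that the realness of $\nu$ and the symmetry of $\phi$ are the non-negotiable ingredients in either route. The one point to be explicit about is the price of your route: because the $\omega$ produced by the theorem is a priori complex, you need the reflected-conjugate relations $\B\bigl(W(\cdot,\omega)\varphi\bigr)=W(\cdot,\bar\omega)\B\varphi$ and $\B f(\cdot,\omega,\varphi)=f(\cdot,\bar\omega,\B\varphi)$ at \emph{complex} $\omega$. For $W$ this follows from (W1) and the identity theorem, as you note; but $f$ and $g_\omega$ are only assumed Lipschitz in $\omega$, so for them this relation is not a consequence of $\PT$-symmetry on the real axis and must be adopted as the definition of their $\PT$-symmetry (it holds in all the paper's applications, e.g.\ $f=\Gamma(x,\omega)|\varphi|^2\varphi$ with $\overline{\Gamma(-x,\bar\omega)}=\Gamma(x,\omega)$). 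The paper's in-subspace iteration sidesteps this because it only ever evaluates $f$ and $g_\omega$ at real $\omega$, although it too silently uses the holomorphic extension of the symmetry of $W$ when checking $\B I(\cdot,\omega)=I(\cdot,\omega)$ for the contour-integral remainder.
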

\begin{remark}
	Under the assumptions on $A,W,f$, and $g_\omega$ as in Prop. \ref{Thm_PT} and under the simplicity assumption on $\omega_0$, the eigenfunction $\varphi_0$ may always be chosen $\PT$-symmetric. Indeed, $L(\cdot,\omega_0)=A+W(\cdot,\omega_0)$ is $\PT$-symmetric and, applying $\B$ to $L(\cdot,\omega_0)\varphi_0=0$, we get $L(\cdot,\omega_0)\big(\B\varphi_0)=0$ and we conclude by the simplicity of $\omega_0$.

Similarly, we obtain $\B\varphi_0^*=\varphi_0^*$ because $\B L^*(\cdot,\overline{\omega_0})=L^*(\cdot,\overline{\omega_0}) \B$. Indeed,
$$
\begin{aligned}
\langle v, L^*\phi\rangle&=\langle Lv, \phi\rangle=\overline{\langle \overline{(Lv)}(-\cdot),\overline{\phi}(-\cdot)\rangle}=\overline{\langle \B Lv,\B\phi\rangle} = \overline{\langle L\B v,\B\phi\rangle}\\
&=\overline{\langle \B v,L^*\B\phi\rangle}=\langle v,\overline{L^*}(-\cdot)\phi\rangle=\langle v, \B L^*\phi\rangle
\end{aligned}
$$
for all $v,\phi \in D(A)$.
\end{remark}
\begin{proof}
	According to our expansions \eqref{omega_varphi}, we need to prove that $\nu,\sigma$ are real and that $\phi,\psi$ are $\PT$-symmetric. First, $\nu$ in \eqref{nu_GEN} satisfies
	$P_0(\nu\partial_\omega W(\cdot,\omega_0)\varphi_0)=-P_0(g_{\omega_0}(\cdot)|\varphi_0|^\frac1\alpha\varphi_0)$, where we recall that $P_0$ is the spectral projection onto the eigenspace $\langle \varphi_0\rangle$ and that with our assumptions $P_0$ commutes with $\B$. Indeed, 
	\begin{equation*}
	\begin{split}
	\B P_0u&=\B(\langle u,\varphi_0^*\rangle\varphi_0)=\overline{\langle u,\varphi_0^*\rangle}\B\varphi_0=\langle \bar u,\overline{\varphi_0^*}\rangle\varphi_0=\langle\bar u,\varphi_0^*(-\cdot)\rangle\varphi_0\\
	&=\langle\overline{ u(-\cdot)},\varphi_0^*\rangle\varphi_0=\langle\B u,\varphi_0^*\rangle\varphi_0=P_0\B u.
	\end{split}
	\end{equation*}
	Therefore, on the one hand,
	\begin{equation*}
	\B P_0(\nu\partial_\omega W(\cdot,\omega_0)\varphi_0)=\bar\nu P_0\B(\partial_\omega W(\cdot,\omega_0)\varphi_0)=\bar\nu P_0\big(\partial_\omega\B W(\cdot,\omega_0)\B\varphi_0\big)=\bar\nu P_0(\partial_\omega W(\cdot,\omega_0)\varphi_0).
	\end{equation*}
	On the other hand,
	\begin{equation*}
		\begin{split}
			\B P_0(\nu\partial_\omega W(\cdot,\omega_0)\varphi_0)&=-\B P_0(g_{\omega_0}(\cdot)|\varphi_0|^\frac1\alpha\varphi_0)=-P_0\big(\B(g_{\omega_0}(\cdot))\B(|\varphi_0|^\frac1\alpha\varphi_0)\big)\\
			&=-P_0(g_{\omega_0}(\cdot)|\varphi_0|^\frac1\alpha\varphi_0).
		\end{split}
	\end{equation*}
	This yields $\nu\in\R$. Next, let us analyze $\phi$, i.e. the solution of \eqref{phi_GEN} in $Q_0D(A)$. Notice that if $P_0$ commutes with $\B$, then the same holds for $Q_0=Id-P_0$, too. Therefore, applying a similar argument, we get that $\B(\phi)$ satisfies \eqref{phi_GEN} too. Moreover, since $\B(\phi)\in D(A)\cap\langle\varphi_0^*\rangle^\perp$, we get $\B(\phi)=\phi$ because \eqref{phi_GEN} has a unique solution in $Q_0D(A)$. Let us now address $\sigma$ and $\psi$. We will show that the coupled fixed point problem \eqref{3_sys},\eqref{4_sys} preserves the realness of $\sigma$ and the $\PT$-symmetry of $\psi$. First, given $\sigma\in\R$ with $\sigma\in(-r_1,r_1)$, we prove that
	\begin{equation}\label{G_PTOF_PT}
	\B\psi=\psi\qquad\Rightarrow\qquad\B G(\sigma,\psi)=G(\sigma,\psi),
	\end{equation}
	where $G$ is defined in \eqref{4_sys_PTOF}. If \eqref{G_PTOF_PT} holds, then the fixed point $\psi=\psi(\sigma)$ of $\psi=G(\sigma,\psi)$ lies in $B_{r_2}(0)\cap\{\eta\in L^2(\R^d)\,|\,\B\eta=\eta\}$. To this aim, first we notice that for $u\in Q_0L^2(\R^d)$ it holds
	\begin{equation*}
	\begin{split}
	\B\big(Q_0L(\cdot,\omega_0)Q_0\big)^{-1}u&=\big(Q_0L(\cdot,\omega_0)Q_0\big)^{-1}\big(Q_0L(\cdot,\omega_0)Q_0\big)\B\big(Q_0L(\cdot,\omega_0)Q_0\big)^{-1}u\\
	&=\big(Q_0L(\cdot,\omega_0)Q_0\big)^{-1}\B\big(Q_0L(\cdot,\omega_0)Q_0\big)\big(Q_0L(\cdot,\omega_0)Q_0\big)^{-1}u\\
	&=\big(Q_0L(\cdot,\omega_0)Q_0\big)^{-1}\B u.
	\end{split}
	\end{equation*}
Next, recalling that $\varphi_0$, $\phi$, and $\psi$ are now $\PT$-symmetric, we get for $R$ (defined in \eqref{4_sys})
	\begin{equation*}
	\begin{split}
	\B R(\psi)&= Q_0\big(\B f(\cdot,\omega,\varphi)-\varepsilon^{\alpha+1}\B(g_{\omega_0}(\cdot)|\varphi_0|^\frac1\alpha\varphi_0)\big)+\varepsilon^{\alpha+1+\tau}\sigma Q_0(\partial_\omega\B(W(\cdot,\omega_0))\B(\varphi_0))\\
	&\quad+\varepsilon^{\alpha+2}Q_0\bigg(\frac{\nu^2}2\partial_\omega^2\B(W(\cdot,\omega_0))\B(\varphi_0)+\nu\partial_\omega\B(W(\cdot,\omega_0))\B(\phi)\bigg)+Q_0(\B v(\cdot,\omega,\varphi))\\
	&=R(\psi)+Q_0(\B v(\cdot,\omega,\varphi)-v(\cdot,\omega,\varphi)).
	\end{split}
	\end{equation*}
Inspecting all terms in $v$ appearing in \eqref{v} and exploiting $\sigma\in\R$ (and therefore $\omega\in\R$), we obtain that $\B v(\cdot,\omega,\varphi)=v(\cdot,\omega,\varphi)$, so \eqref{G_PTOF_PT} is proved. Finally, with similar manipulations one proves that $\B(\psi)=\psi$ implies $\B S(\sigma)=S(\sigma)$, where $S$ is defined in \eqref{3_sys_PTO}, obtaining thus $S(\sigma)\in\R$ since $\B S(\sigma)=\overline{S(\sigma)}$. Therefore, for a given $\PT-$symmetric $\psi(\sigma)$ the fixed point $\sigma$ of $S(\sigma)$ must be real.
	This completes the proof.
\end{proof}

%----------------------------------------------------------
%----------------------------------------------------------

\section{Applications to nonlinear surface plasmons}\label{Section_SPPs}

As mentioned in the introduction, we are interested in surface plasmon polaritons (SPPs) localized at one or more interfaces between different dielectric and metal layers. We consider the time harmonic and $z$-independent ansatz \eqref{E:ans-SPP} for the Maxwell system \eqref{Max}. A simple nonlinear, non-local relation for the displacement field is 
\begin{equation}\label{D}
\cD(x,y,z,t)=\cE+\int_\R\chi^{(1)}(x,y,z,t-s)\cE(s)\dd s +\int_\R \chi^{(3)}(x,y,z,t-s)\left((\cE\cdot\cE)\cE\right)(s)\dd s,
\end{equation}
where $\chi^{(1,3)}:\R^3\times \R\to \R$ and $\chi^{(1,3)}(\cdot,\tau)=0$ for $\tau<0$. The functions $\chi^{(1)}$ and $\chi^{(3)}$ are the linear and the cubic electric susceptibilities of the material, respectively. In general, $\chi^{(1)}$ and $\chi^{(3)}$ are tensors but in the isotropic case, which we assume, the relation in \eqref{D} with scalar $\chi^{(1)}$ and $\chi^{(3)}$ holds, see \cite[Section 2d]{MolNew}.

Substituting a monochromatic ansatz $\cE(x,y,z,t)=E(x,y,z)e^{\ri \omega t} + \text{c.c.}$ in \eqref{Max} and neglecting higher harmonics (terms proportional to $e^{3\ri \omega t}$ and $e^{-3\ri \omega t}$), we get $\cD(x,y,z,t)=D(x,y,z)e^{\ri\omega t}+\text{c.c.}$ with
$$D(x,y,z)=(1+\hat\chi^{(1)}(x,y,z,\omega))E+\hat\chi^{(3)}(x,y,z,\omega)(2|E|^2E+(E\cdot E)\bar E).$$
Here $|E|^2=E\cdot\overline{E}$ and $\hat{f}(\omega)$ is the Fourier-transform of $f$. Neglecting higher harmonics is a common approach in theoretical studies of weakly nonlinear optical waves \cite{shen1984}.
Using \eqref{Max}, we get that both $H$ and $D$ are curl-fields such that the divergence conditions $\nabla\cdot D=0, \nabla\cdot H=0$ hold automatically. Defining $c:=(\epsilon_0\mu_0)^{-1/2}$, in the second order formulation we have
$\frac{\omega^2}{c^2}D=\nabla\times\nabla\times E,$
i.e. 
\beq\label{E:curlcurl}
\nabla\times\nabla\times E - \frac{\omega^2}{c^2}(1+\hat\chi^{(1)}(x,y,z,\omega))E-\frac{\omega^2}{c^2}\hat\chi^{(3)}(x,y,z,\omega)(2|E|^2E+(E\cdot E)\bar E)=0.
\eeq

Recall again only odd nonlinearities are allowed  in $\cD$ when studying time harmonic waves. Even nonlinearities do not produce terms proportional to $e^{\ri\omega t}$.

We consider structures independent of $y$ and $z$, i.e. $\hat\chi^{(1,3)}=\hat\chi^{(1,3)}(x,\omega)$. Interfaces between layers are thus parallel  to the $yz$-plane.

For the TE-ansatz in \eqref{E:ans-SPP}, with only one nontrivial component, equation \eqref{E:curlcurl} reduces to the scalar problem
\begin{equation}\label{NLpb}
\varphi''+W(x,\omega)\varphi+\Gamma(x,\omega)|\varphi|^2\varphi=0,\quad x\in \R
\end{equation}
with
\begin{equation*}
W(x,\omega):=\frac{\omega^2}{c^2}(1+\hat\chi^{(1)}(x,\omega)) - k^2, \quad \Gamma(x,\omega):=\frac{3\omega^2}{c^2}\hat\chi^{(3)}(x,\omega).
\end{equation*} 

We study layers of $x$-periodic (including homogeneous) media. When a metal layer is homogeneous, we choose the simplest \textit{Drude model} of the linear susceptibility in that layer \cite{Pitarke_2006}
\begin{equation}\label{Drude}
\hat{\chi}^{(1)}(x,\omega)= -\frac{\omega_p^2}{\omega^2+\ri\gamma\omega},
\end{equation}
where $\omega_p\in\R^+$ and $\gamma\in\R$. For dielectric layers we choose a periodic (possibly constant) and generally complex $\hat\chi^{(1)}(\cdot,\omega)$. The imaginary part of $\hat\chi^{(1)}$ is related to the loss or gain of energy of an electromagnetic wave propagating inside the medium. For most of the materials, and in particular for metals, it is negative, corresponding to a \textit{lossy} material. However, in \textit{active} (doped) materials the energy of an electromagnetic wave is amplified (energy gain), and therefore the imaginary part of $\hat\chi^{(1)}$ is positive. Materials with a real $\hat\chi^{(1)}(\cdot,\omega)$ are called \textit{conservative}.

To make equation \eqref{NLpb} dimensionless, as well as in order to use the physical values of the parameters involved in the numerical study in Section \ref{S:3layers}, we introduce the new rescaled spatial variable, frequency, and wave number
	\begin{equation}\label{tildeRescaling}
	\tilde x:=\frac{\omega_p}c x,\quad\quad\tilde\omega:=\frac\omega{\omega_p},\quad\quad\tilde k:=\frac c{\omega_p} k,
	\end{equation}
	where $\omega_p$ is the bulk plasma frequency of a prescribed metal layer. Defining then $\tilde\varphi(\tilde x):=\varphi(x)$, we obtain the same equation as \eqref{NLpb} but in the tilde variables and with $W$ and $\Gamma$ respectively replaced by 
	\begin{equation}\label{tildeW}
	\tilde W(\tilde x,\tilde\omega):=\frac{c^2}{\omega_p^2}W\left(\frac c{\omega_p}\tilde x,\omega_p\tilde\omega\right)=\tilde\omega^2\left(1+\hat\chi^{(1)}\left(\frac c{\omega_p}\tilde x,\omega_p\tilde\omega\right)\right)-\tilde{k}^2,
	\end{equation}
	\begin{equation}\label{tildeGamma}
	\tilde\Gamma(\tilde x,\tilde\omega):=\frac{c^2}{\omega_p^2}\Gamma\left(\frac c {\omega_p}\tilde x,\omega_p\tilde\omega\right)=3\tilde\omega^2\hat\chi^{(3)}\left(\frac c {\omega_p}\tilde x,\omega_p\tilde\omega\right)\!.
	\end{equation}
	Note that for the Drude model the susceptibility is
$$
\hat\chi^{(1)}\left(\frac c{\omega_p}\tilde x,\omega_p\tilde\omega\right)= -\frac{1}{\tilde{\omega}^2+\ri\tilde\gamma\tilde\omega}\,,
$$
where $\tilde\gamma:=\gamma/\omega_p$.

For the sake of a simpler notation henceforth we will simply write $x,\omega,\gamma,k,W$, and $\Gamma$ instead of $\tilde x, \tilde\omega,\tilde\gamma,\tilde k, \tilde W$, and $\tilde \Gamma$.

Due to the presence of material interface(s), solutions of the Maxwell's equations \eqref{Max} are not smooth. However, they satisfy the interface conditions that the tangential component of $\cE$, the normal component of $\cD$ and the whole vector $\cH$ be continuous across each interface, see Sec. 33-3 in \cite{feynman2}. For our interfaces parallel to the $yz$-plane we get \eqref{E:IFC}. For the ansatz \eqref{E:ans-SPP} the interface conditions reduce to a $C^1$-continuity condition on $\varphi$
\begin{equation}\label{IFCs}\tag{IFCs}
\llbracket \varphi\rrbracket =\llbracket \partial_x\varphi \rrbracket =0.
\end{equation}
Our nonlinear problem \eqref{eq} is thus equation \eqref{NLpb} with $W$ and $\Gamma$ respectively replaced by \eqref{tildeW}-\eqref{tildeGamma} and coupled with the \eqref{IFCs}.

To apply our bifurcation result to \eqref{NLpb} with \eqref{IFCs}, a real, linear eigenvalue is needed. We show that such an eigenvalue exists in some $\PT$-symmetric choices of the layers. SPPs in $\PT$-symmetric structures have been studied in the physics literature before by employing active materials, see e.g. \cite{AD14,Barton2018,Han_2014,Wang_2017}. Nevertheless, we are not aware of a rigorous mathematical existence proof of the asymptotic expansion of nonlinear SPPs in the frequency dependent case.

Theorem \ref{Theorem_BIF_NLeigv} can be applied to \eqref{NLpb} with finitely many interfaces (at $x=x_1,\dots,x_N$) using the following natural choice of $A$, $D(A)$, and $f$:
$$A:=-\frac{d^2}{dx^2},\quad\;\; f(x,\omega,\varphi):=\Gamma(x,\omega)|\varphi|^2\varphi,$$
and
\begin{equation}\label{E:DA}
\begin{split}
D(A):=\big\{&\varphi\in L^2(\R)\;:\;\varphi|_{(x_j,x_{j+1})}\in H^2((x_j,x_{j+1}),\C)\;\mbox{for}\;j=0,\dots N\\
&\mbox{and}\;\, \llbracket \varphi\rrbracket_j =\llbracket \partial_x\varphi \rrbracket_j=0,\, j=1,\dots,N\big\},
\end{split}
\end{equation}
where $\llbracket\varphi\rrbracket_j:=\lim_{x\to x_j+}\varphi(x)-\lim_{x\to x_j-}\varphi(x)$ and $x_0:=-\infty$, $x_{N+1}:=\infty$. It is easy to see that $D(A)=H^2(\R,\C)$ using the definition of the second weak derivative and the fact that at the interfaces any $\varphi\in D(A)$ is of class $C^1$. Note that assumptions (A1)-(A2), (f1)-(f4) are satisfied for any $\omega \in \C\setminus \{0,-\ri \gamma\}$.
%------------------------------------------
\subsection{The linear eigenvalue problem}
In order to apply Theorem \ref{Theorem_BIF_NLeigv}, we need to find an eigen-pair $(\omega_0,\varphi_0)$ of the \textit{linear} problem
\begin{equation}\label{Lpb}
\varphi''+W(x,\omega)\varphi=0,\quad x\in\R
\end{equation}
coupled with \ref{IFCs}, with a simple and isolated $\omega_0\in\C\setminus\{0\}$. 
To ensure the realness of the frequency we need, in fact, $\omega_0\in\R\setminus\{0\}$ as well as the $\PT$-symmetry of $W(\cdot,\omega)$, such that Proposition \ref{Thm_PT} can be applied. 
We shall see that the existence of a simple and isolated eigenvalue $\omega_0\in\R\setminus\{0\}$ strongly depends on the choice of the layers. As we show in Sec. \ref{S:2layers}, the choice of two layers ($N=1$) of periodic materials with one being conservative and the other a non-conservative homogeneous material (i.e. with a complex $\hat{\chi}^{(1)}$) leads to no real eigenvalues $\omega_0$. On the other hand, in Sec. \ref{S:3layers} we find two $\PT$-symmetric settings with three homogeneous layers ($N=2$) leading to the existence of an isolated simple eigenvalue $\omega_0\in \R$ in \eqref{Lpb}. These settings are: (active dielectric - conservative Drude metal - lossy dielectric) and a hypothetical setting of (Drude metal with gain - lossless dielectric - lossy Drude metal). 

\subsubsection{Two periodic layers}\label{S:2layers}
We consider first the case of two layers, each being either a periodic metal or a periodic dielectric, where we set the interface at $x=0$. Hence 
$$W(x,\omega) = W_\pm(x,\omega) \quad \text{for } \pm x>0,$$
where the functions $W_\pm(x,\omega)=\omega^2(1+\hat{\chi}^{(1)}_\pm(x,\omega))-k^2$ are periodic in $x$ with periods $\nu_\pm >0$. The governing linear problem \eqref{Lpb} has two linearly independent Bloch wave solutions $\psi^\pm_{1,2}$ on the half line $\pm x>0$ respectively. The Bloch wave theory for the Hill's equation \eqref{Lpb} can be found in \cite{Eastham}. A necessary condition for the existence of an $L^2(\R,\C)$-solution of \eqref{Lpb} is
$$0\notin S:=\sigma\left(-\frac{d^2}{dx^2}- W_+(\cdot,\omega)\right) \cup \sigma\left(-\frac{d^2}{dx^2}- W_-(\cdot,\omega)\right).$$
Otherwise (if $0\in S)$, on at least one of the half lines there is no decaying solution. If $0\notin S$, the solutions $\psi^\pm_{1,2}$ have the form
$$
\psi^+_{j}(x)=p^+_j(x)e^{(-1)^j\lambda_+ x}, \qquad \psi^-_{j}(x)=p^-_j(x)e^{(-1)^j\lambda_- x},\quad j=1,2,
$$
where $\Real(\lambda_\pm) >0, p^\pm_j(x+\nu_\pm)=p^\pm_j(x)$ for all $x\in \R$ and both $j=1,2$. If, say, $W_+$ is real, then $p^+_j$ and $\lambda_+$ can be chosen real but $p_j^+$ is generally $2\nu_+$-periodic, see \cite{Eastham}. 

An $L^2(\R)$-solution is given by
$$
\varphi(x)=\begin{cases}\varphi^+(x):=p_1^+(x)e^{-\lambda_+ x}, & x>0,\\
\varphi_-(x):=p_2^-(x)e^{\lambda_- x}, & x<0.
\end{cases}
$$
Due to the linearity of \eqref{Lpb} the $C^1$-matching condition \eqref{IFCs} is equivalent to the condition
\beq\label{E:R-cond}
R_+:=\frac{\varphi_+'(0)}{\varphi_+(0)}=\frac{\varphi_-'(0)}{\varphi_-(0)}=:R_-.
\eeq
Note that by varying the parameters $\omega,k\in \R$ we get $R_\pm=R_\pm(\omega,k)$. 

In \cite{DPR09} the case of real, periodic $W_\pm$, i.e. the one of two periodic conservative materials, was considered and eigenvalues were found by varying $k$ and searching for zeros of $R_+(k)-R_-(k)$.

In the presence of non-conservative materials the respective potential $W$ is complex. It is easy to see that the single interface of a conservative material (e.g. a classical dielectric) with a real $W$ and non-conservative homogeneous material (e.g. a Drude metal) with a complex $W$ does not support any eigenvalues of \eqref{Lpb}. Note that this does not contradict the existence of SPPs at such single metal/dielectric interfaces in general because this existence holds for TM-polarisations, see \cite{Pitarke_2006}. Without loss of generality we assume that the conservative material is on the half line $x>0$, i.e. $W_+(x,\omega)$ is real. The non-conservative material in $x<0$ is homogeneous (described, e.g. by \eqref{Drude}) such that $W_-(x,\omega)=W_-(\omega)$ is complex and independent of $x$. Hence $\varphi_-(x)=ce^{\lambda_- x}$ with\footnote{Henceforth, for any $z\in\C$ we choose the square root $\sqrt{z}$ as the one solution $a$ of $a^2=z$ with $\text{arg}(a)\in (-\pi/2,\pi/2]$.} $\lambda_-=\sqrt{W_-(\omega)}\in \C\setminus \R$, such that 
$$R_-=\lambda_-,$$
and $\varphi_+(x)=p_1^+(x)e^{-\lambda_+ x}$ with $\lambda_+\in \R$ and $p_1^+(x)$ real and $2\nu_+-$periodic, such that
$$R_+=\frac{p_1^{+'}(0)}{p_1^+(0)}-\lambda_+\in \R.$$
Note that $p_1^+(0)=0$ implies $c=0$ due to \eqref{IFCs} such that only the trivial solution $\varphi\equiv 0$ is produced in that case. 

Because $R_-\in \C\setminus \R$ and $R_+\in\R$, condition \eqref{E:R-cond} is not satisfied and no eigenvalue exists.

\subsubsection{Three homogeneous layers}\label{S:3layers}

Next we consider three homogeneous material layers with interfaces at $x=0$ and $x=d>0$, i.e. a sandwich geometry with two unbounded layers, 
\begin{equation}\label{chi1_w_3layers_homog}
\hat{\chi}^{(1)}(x,\omega):=\begin{cases}
\eta_- & \text{for $x<0$,} \\
\eta_* & \text{for $x\in(0,d)$,} \\
\eta_+ & \text{for $x>d$,}
\end{cases}
\quad\mbox{i.e.}\quad
W(x,\omega)=\begin{cases}
\omega^2(1+\eta_-)-k^2=:W_- & \text{for $x<0$,} \\
\omega^2(1+\eta_*)-k^2=:W_* & \text{for $x\in(0,d)$,} \\
\omega^2(1+\eta_+)-k^2=:W_+ & \text{for $x>d$,}
\end{cases}
\end{equation}
where $\eta_\pm,\eta_*\in\C$. A localized solution of \eqref{Lpb} is possible only if 
\beq\label{E:0-spec}
0\notin S:=\bigcup_{\pm} \sigma\left(-\frac{d^2}{dx^2}-W_\pm(\omega)\right).
\eeq
Note that if the semi-infinite layers are conservative, then $\eta_+,\eta_-\in\R$ and \eqref{E:0-spec} is equivalent to 
$$k^2>\omega^2(1 + \max\{\eta_+,\eta_-\}).$$
Under assumption \eqref{E:0-spec} we have 
\begin{equation}\label{eigenfct}
\varphi(x)=\begin{cases}
Ae^{\lambda_-x} & \text{for $x<0$,} \\
Be^{\mu x}+Ce^{-\mu x} & \text{for $x\in(0,d)$,} \\
De^{-\lambda_+x} & \text{for $x>d$,}
\end{cases}
\end{equation}
where $\mu:=\sqrt{-W_*}$, $\lambda_\pm:=\sqrt{- W_\pm}$, $\Real(\lambda_\pm)>0$, and where $A,B,C,D\in \C$ are constants to be determined. We can normalize such that $D=1$. Then the $C^1$-matching at $x=0$ and $x=d$ is equivalent to
\begin{equation}\label{parametersBCD}
\begin{cases}
A=B+C\\
A\lambda_-=\mu B-\mu C
\end{cases}
\quad
\begin{cases}
e^{-\lambda_+d}=Be^{\mu d}+Ce^{-\mu d}\\
-\lambda_+e^{-\lambda_+d}=\mu Be^{\mu d}-\mu Ce^{-\mu d}.
\end{cases}
\end{equation}
This system has the unique solution
$$A=\frac{e^{-\lambda_+d}}2\left[\left(1-\frac{\lambda_+}{\mu}\right)e^{-\mu d}+\left(1+\frac{\lambda_+}{\mu}\right)e^{\mu d}\right],$$
$$B=\frac12\left(1-\frac{\lambda_+}{\mu}\right)e^{-(\mu+\lambda_+)d},\quad C=\frac12\left(1+\frac{\lambda_+}{\mu}\right)e^{-(-\mu+\lambda_+)d},$$ 
together with a condition on the parameter $d$:
\begin{equation}\label{cond_d_cD|M|cD}
\exists m\in \Z: \  d=\tilde d_m:=\frac1{2\mu}\left[\log\left(\frac{(\mu-\lambda_-)(\mu-\lambda_+)}{(\mu+\lambda_-)(\mu+\lambda_+)}\right)+2\pi \ri m\right]\in (0,\infty).
\end{equation}
The term $2\pi \ri m$ appears due the fact that $z=\log(b)+2\pi \ri m$ solves $e^z=b$ for any $m\in \Z$. Condition \eqref{cond_d_cD|M|cD} means that under assumption \eqref{E:0-spec} there is a width $d>0$ supporting a real eigenvalue if and only if $\tilde d_m=\tilde d_m(\omega)$ is positive for some value of $\omega \in\R$ and some $m\in \Z$. If all the layers are homogeneous conservative materials, \eqref{cond_d_cD|M|cD} cannot be satisfied because $\mu>0$ and the argument of the logarithm on the right hand side of \eqref{cond_d_cD|M|cD} lies in $(0,1)$ such that $\mbox{Re}(\tilde d_m)<0$ for all $m\in \Z$.

Next, we consider three sandwich settings, out of which the last two are $\PT$-symmetric. As the numerical evaluation of $\tilde{d}_m$ suggests, both of these apparently lead to the existence of linear eigenvalues $\omega_0\in \R$ and hence to real bifurcating nonlinear eigenvalues $\omega$. The first one of these $\PT$-symmetric cases has been taken from the physics literature \cite{AD14,Barton2018} while the second one corresponds to a hypothetical material. 

\paragraph{Case 1: conservative dielectric - Drude metal - conservative dielectric.} 

Note that a Drude metal layer being sandwiched between two conservative dielectrics does not produce a $\PT$-symmetric potential $W$. Hence, we do not expect a real eigenvalue $\omega$.

We have $\eta_\pm>0$ and $\eta_*=-\frac{1}{\omega^2+\ri\gamma \omega}\in \C$. Our numerical study of $\tilde d_m$ shows that for any choice of the constants $\eta_+,\eta_-,\gamma\in\R$ and $m\in \Z$ we cannot find a frequency $\omega$ for which $\tilde d_m(\omega)\in (0,\infty)$.
Figure \ref{MDM_plot} (a), (b) shows as an example the behaviour of the maps $\omega\mapsto\Real(\tilde d_0(\omega))$ and $\omega\mapsto\Imag(\tilde d_0(\omega))$ for different choices of $\gamma,\eta_+,\eta_-$.
\begin{figure}[ht!]
	\centering
	\begin{subfigure}[b]{0.4\linewidth}
		\includegraphics[width=\linewidth]{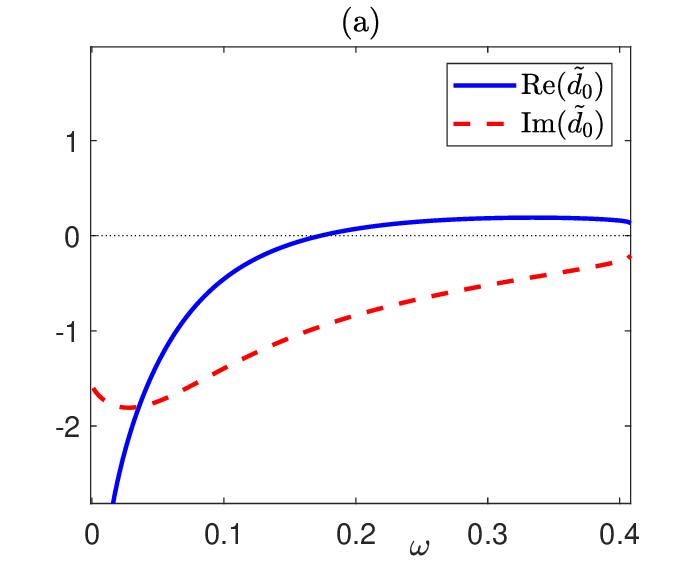}
	\end{subfigure}
	\quad
	\begin{subfigure}[b]{0.4\linewidth}
		\includegraphics[width=\linewidth]{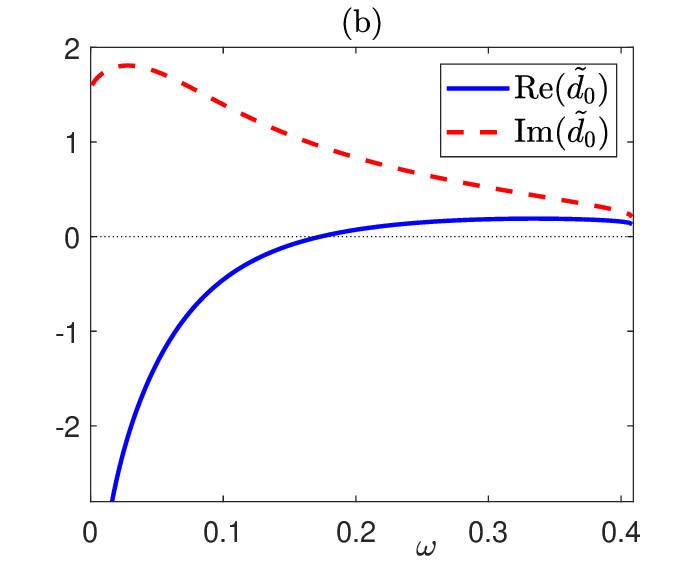}
	\end{subfigure}
	\begin{subfigure}[b]{0.4\linewidth}
		\includegraphics[width=\linewidth]{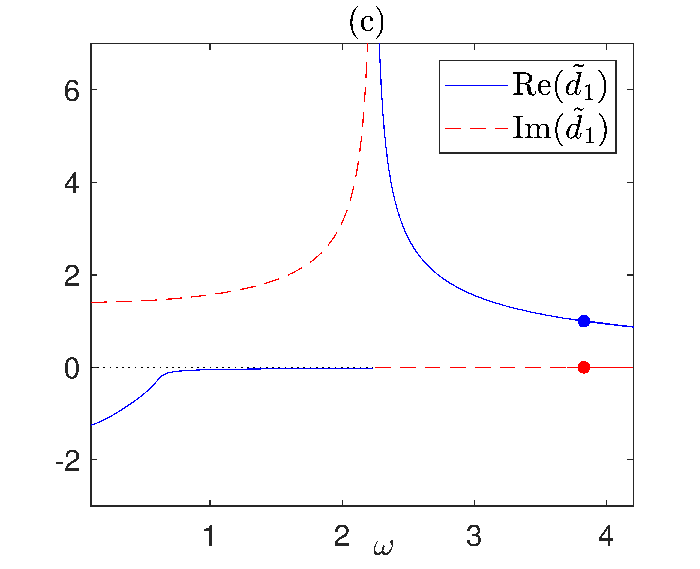}
	\end{subfigure}
	\quad
	\begin{subfigure}[b]{0.4\linewidth}
		\includegraphics[width=\linewidth]{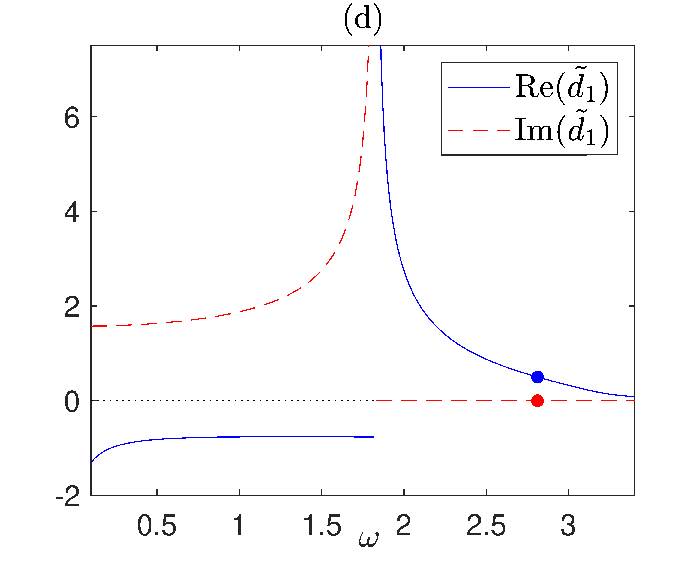}
	\end{subfigure}
	\caption{{\footnotesize The graph of $\omega\mapsto \Real(\tilde d_m(\omega))$ (full blue line) and $\omega\mapsto \Imag(\tilde d_m(\omega))$ (dashed red line).  
	(a), (b) Case 1 with $k=1$, $\gamma=\mp0.5$, $\eta_+=5$, $\eta_-=0.05$, and $m=0$. (c) Case 2 with $k=2$, $\eta_\pm=9.2\pm1.28\ri$, $\gamma=0$ and $m=1$. (d) Case 3 with $k=2$, $\gamma^+=-\gamma^-=0.5$, $\eta_*=0.2$ and $m=1$. In (c) (resp. (d)) the chosen value $d=1$, attained at $\omega\approx 3.8275$ (resp. $d=0.5$, attained at $\omega\approx 2.8096$), is highlighted in the graph.}}
	\label{MDM_plot}
\end{figure}

\paragraph{Case 2: $\PT$-symmetric (dielectric - metal - dielectric) setting.}

For the case of a homogeneous Drude metal sandwiched between two homogeneous non-conservative dielectric layers, we have 
\beq\label{E:eta_gD0MlD}
\eta_\pm=\eta_R^\pm+\ri \eta_I^\pm\qquad\text{and}\qquad \eta_* = -\frac{1}{\omega^2+\ri\gamma\omega}.
\eeq
This setting leads to a $\PT$-symmetric $W$ (with respect to $x=d/2$, i.e. $W(\frac d2-x,\omega)=\overline{W(\frac d2+x,\omega)}$ for all $x\in\R$ and $\omega\in\R\setminus\{0\}$) if we choose
$$\eta_R^+=\eta_R^-,\quad \eta_I^+=-\eta_I^-,\quad\gamma=0.
$$
Hence, one of the dielectric layers is lossy while the other is active and generates energy gain.

Note that in this example the simple transformation $\omega':=\omega^2$ leads to a linear dependence on the spectral parameter $\omega'$.

This configuration of a conservative metal sandwiched between a couple of well-prepared active and lossy dielectrics was considered, e.g., in \cite{AD14,Barton2018}. As the active material we consider titanium dioxide (TiO$_2$), with refractive index $n=3.2+0.2\ri$, and as the metal we choose silver with the bulk plasma frequency $\omega_p(\text{Ag})=8,85\cdot 10^{15}\text s^{-1}$. We use $\omega_p(\text{Ag})$ as the rescaling parameter in \eqref{tildeRescaling}. Recalling the relation $W(x)=\omega^2n(x)^2-k^2$ between the refractive index and the potential $W$, we choose $k=2$ and obtain $\eta_\pm=9.2\pm1.28\ri$.

Our numerical tests show that this $\PT$-symmetric setting leads to $\tilde d(\omega)>0$ for all $\omega>\omega_{DMD}\approx 2.23$, see  Figure \ref{MDM_plot}(c). For the computation of the bifurcation we choose the point $\omega_0\approx 3.8275$, for which $d=\tilde{d}_{1}(3.8275) \approx 1$.

As the graph in  Figure \ref{MDM_plot}(c) suggests, $\frac d{d\omega}\Real(\tilde d_{1}(\omega))\neq 0$ for all $\omega>\omega_{DMD}$. Hence, for any width $d\in\tilde d_{1}((\omega_{DMD},\infty))$ there is only one $\omega_0$ for which $d=\tilde d_{1}(\omega_0)$, i.e. for which \eqref{cond_d_cD|M|cD} holds. We denote the corresponding linear eigenfunction $\varphi(\cdot,\omega_0)$ by 
$$\widetilde\varphi_0:=\varphi(\cdot,\omega_0),$$
see \eqref{eigenfct}. Clearly, $\varphi_0=\widetilde\varphi_0/\|\widetilde\varphi_0\|$.

\paragraph{Case 3: $\PT$-symmetric (metal - dielectric - metal) setting.}

For the case of a homogeneous conservative dielectric sandwiched between two homogeneous Drude metal layers we have 
\beq\label{E:eta_MDM}
\eta_\pm = -\frac{2\pi}{\omega^2+\ri\gamma^\pm \omega}\in \C \qquad \text{ and } \quad  \eta_*>0.
\eeq
This setting leads to a $\PT$-symmetric $W$ with respect to $x=d/2$ if we choose $\omega\in\R\setminus\{0\}$ and $\gamma^+=-\gamma^-.$
Note that this setting (with $\gamma_+\neq 0$) is hypothetical as materials with $\hat\chi^{(1)}(\omega) = - 1/(\omega^2+\ri \gamma \omega)$ and a negative $\gamma$ may not exist.

Also notice that here, unlike the previous example, the dependence of $W$ on $\omega$ is truly nonlinear. We retrieve numerically again a similar plot for the function $\omega\mapsto\tilde d(\omega)$, as shown in Figure \ref{MDM_plot}(d): $\tilde d(\omega)>0$ for all $\omega>\omega_{MDM}$ with some $\omega_{MDM}>0$. For $k=2,\gamma^+=0.5$ and $\eta_*=0.2$ we get $\omega_{MDM}\approx 1.83$. For the computation of the bifurcation we choose the point $\omega_0\approx 2.8096$ for which $d=\tilde d_1(\omega_0)\approx 0.5$. %Analogously to Case 2, such $\omega_0$ is an isolated simple eigenvalue.

%------------------------------------------
\subsection{Bifurcation of a nonlinear eigenvalue}
%are now in a position
We aim to apply the bifurcation result of Theorem \ref{Theorem_BIF_NLeigv} in its symmetric version provided by Proposition \ref{Thm_PT} in both settings given by Case 2 and Case 3 and find a bifurcating branch of solutions to the reduced Maxwell's equation \eqref{NLpb}. In both cases we choose the cubic susceptibility $\hat\chi^{(3)}\equiv 1$. We need thus to verify that $\omega_0$, given by the numerical discussion in Section \ref{S:3layers} for a fixed suitable layer width $d$, is a simple isolated eigenvalue in the sense of (E1)-(E2). %We remind that $d$ is the layer width obtained for the eigenvalue $\omega_0$.

\paragraph{Verification of (E1): $\boldsymbol{\omega_0}$ is simple}

Because in \eqref{eigenfct}-\eqref{parametersBCD}  the constants $A,B,C,D$ are unique (up to normalization of $\widetilde\varphi_0$), it is clear that $\ker L(\cdot,\omega_0)=\spann\,\widetilde\varphi_0$, so $0$ as eigenvalue of $L(\cdot,\omega_0)$ is geometrically simple. To prove the algebraic simplicity, suppose by contradiction that there exists a Jordan chain associated to $\omega_0$. This means, there exists $u\in D(A)$ (see \eqref{E:DA}) such that
\begin{equation}\label{Lpb_Jordan}
	L(\cdot,\omega_0)u=-u''-W(\cdot,\omega_0)u=\widetilde\varphi_0\qquad\mbox{in }\,\R.
\end{equation}
%where $hihi$ is an eigenvector related to $\omega_0$, i.e. such that $L(\omega_0)\varphi=0$ in $\R$, which in our case is given by \eqref{eigenfct}-\eqref{parametersBCD}. 
Solving \eqref{Lpb_Jordan} explicitly using the variation of constants, one finds
\begin{equation}\label{u}
	u(x)=\begin{cases}
		c_1e^{\lambda_-x}+\frac1{2\lambda_-}\left(e^{\lambda_-x}\int_x^0e^{-\lambda_-t}\widetilde\varphi_0(t)\dd t+e^{-\lambda_-x}\int_{-\infty}^xe^{\lambda_-t}\widetilde\varphi_0(t)\dd t\right) & \text{for $x<0$,} \\
		c_2e^{\mu x}+c_3e^{-\mu x}+\frac1{2\mu}\left(e^{\mu x}\int_x^de^{-\mu t}\widetilde\varphi_0(t)\dd t+e^{-\mu x}\int_0^xe^{\mu t}\widetilde\varphi_0(t)\dd t\right) & \text{for $x\in(0,d)$,} \\
		c_4e^{-\lambda_+x}+\frac1{2\lambda_+}\left(e^{\lambda_+x}\int_x^{+\infty}\!e^{-\lambda_+t}\widetilde\varphi_0(t)\dd t+e^{-\lambda_+x}\int_d^xe^{\lambda_+t}\widetilde\varphi_0(t)\dd t\right) & \text{for $x>d$.}
	\end{cases}
\end{equation}
In order to belong to $D(A)$, $u$ must satisfy the $C^1$-matching at the interfaces $x=0$ and $x=d$. This implies that the constants $c_1$, $c_2$, $c_3$, $c_4$ have to solve the linear system
$$T\left(c_1,c_2,c_3,c_4\right)^\trans=b,$$
where
$$T:=\left(\begin{matrix}
	-1 & 1 & 1 & 0 \\
	-\lambda_- & \mu & -\mu & 0 \\
	0 & e^{\mu d} & e^{-\mu d} & -e^{-\lambda_+d} \\
	0 & \mu e^{\mu d} & -\mu e^{-\mu d} & \lambda_+e^{-\lambda_+d}
\end{matrix}\right)$$
and
$$2b:=\left(\begin{matrix}
	\frac1{\lambda_-}\int_{-\infty}^0e^{\lambda_-t}\widetilde\varphi_0(t)\dd t-\frac1\mu\int_0^de^{-\mu t}\widetilde\varphi_0(t)\dd t\\
	-\int_{-\infty}^0e^{\lambda_-t}\widetilde\varphi_0(t)\dd t-\int_0^de^{-\mu t}\widetilde\varphi_0(t)\dd t\\
	- \frac{e^{-\mu d}}\mu\int_0^de^{\mu t}\widetilde\varphi_0(t)\dd t+\frac{e^{\lambda_+ d}}{\lambda_+}\int_d^{+\infty}\!e^{-\lambda_+t}\widetilde\varphi_0(t)\dd t\\
	e^{-\mu d}\int_0^de^{\mu t}\widetilde\varphi_0(t)\dd t+e^{\lambda_+ d}\int_d^{+\infty}\!e^{-\lambda_+t}\widetilde\varphi_0(t)\dd t
\end{matrix}\right)\,.$$
Note that $T$ is singular since
\begin{equation*}
	\det T = e^{-(\lambda_++\mu)d}\left((\lambda_+-\mu)(\lambda_--\mu)- e^{2\mu d}(\lambda_++\mu)(\lambda_-+\mu)\right)=0
\end{equation*}
by our choice of $d$ in \eqref{cond_d_cD|M|cD}. In order to find a contradiction and exclude the existence of a solution $u\in D(A)$ of \eqref{Lpb_Jordan}, we now prove that $b$ is \textit{not} orthogonal to the kernel of $\overline T^\trans$. Standard computations show that $\ker\overline T^\trans$ is one-dimensional and given by
\begin{equation*}
	\ker\overline T^\trans=\spann\,p:=\spann\,\left(\overline{\lambda_-}\left(\overline\mu-\overline{\lambda_+}\right)e^{-\overline\mu d}, \left(\overline\mu-\overline{\lambda_+}\right)e^{-\overline\mu d}, \overline{\lambda_+}\left(\overline{\lambda_-}+\overline\mu\right), \overline{\lambda_-}+\overline\mu\right)^\trans.
\end{equation*}
The scalar product $\left(b,p\right)$ then reads
\begin{equation*}
	\begin{split}
		\left(b,p\right)&=-2(\lambda_+-\mu)e^{-\mu d}\int_{-\infty}^0e^{\lambda_-t}\widetilde\varphi_0(t)\dd t+2e^{\lambda_+ d}(\lambda_-+\mu)\int_d^\infty e^{-\lambda_+t}\widetilde\varphi_0(t)\dd t\\
		&\quad+\frac{e^{-\mu d}(\lambda_+-\mu)}{\mu}\left[(\lambda_--\mu)\int_0^de^{-\mu t}\widetilde\varphi_0(t)\dd t-(\lambda_-+\mu)\int_0^de^{\mu t}\widetilde\varphi_0(t)\dd t\right]\,.
	\end{split}
\end{equation*}
After evaluating the integrals for $\widetilde\varphi_0$ given by \eqref{eigenfct}-\eqref{parametersBCD}, and some algebraic computations in which the identity $e^{2\mu d}=\frac{(\mu-\lambda_+)(\mu-\lambda_-)}{(\mu+\lambda_+)(\mu+\lambda_-)}$ is frequently used, we get
\begin{equation*}
	\begin{split}
		\left(b,p\right)=\frac{e^{-\lambda_+ d}}{2(\lambda_--\mu)}\bigg[\frac{\lambda_-^2-\mu^2}{\lambda_+}+\frac{\lambda_+^2-\mu^2}{\lambda_-}-\frac{ d(\lambda_+^2-\mu^2)(\lambda_-^2-\mu^2)+(\lambda_-+\lambda_+)(\lambda_-\lambda_+-\mu^2)}{\mu^2}\bigg]\,.
	\end{split}
\end{equation*}

Next, we check that $\left(b,p\right)$ is non-zero for the values of $\lambda_\pm$, $\mu$, and $d$ obtained numerically in Section \ref{S:3layers}. We obtain $\left(b,p\right)\approx -19.38 -46.36 \ri$ for Case 2 and  $\left(b,p\right)\approx 0.82 + 1.58 \ri$ for Case 3.

\paragraph{Verification of (E2): $\boldsymbol{\omega_0}$ is isolated} 
Because for $L(x,\omega_0):=-\frac{d^2}{dx^2}-W(x,\omega_0)$ the essential spectrum is given by 
%$$\sigma_\text{ess}\left(L(\cdot,\omega_0)\right)=\bigcup_\pm \left\{\lambda-\omega_0^2(1+\eta_\pm)+k^2: \lambda\in (0,\infty)\right\}$$
$$\sigma_\text{ess}\left(L(\cdot,\omega_0)\right)=\bigcup_\pm \left\{\lambda-W_\pm(\omega_0): \lambda\in [0,\infty)\right\}\,.$$
Since $W_\pm(\omega_0)\in\C\setminus\R$ in both Case 2 and Case 3, we have $0\notin \sigma_\text{ess}\left(L(\cdot,\omega_0)\right)$. Since the essential spectrum is closed, $0$ is isolated from $\sigma_\text{ess}\left(L(\cdot,\omega_0)\right)$.

Next, we show the isolatedness of $0$ from other eigenvalues of $L(\cdot,\omega_0)$.  If $\kappa\in\C\setminus\{0\}$ is an eigenvalue of $L(\cdot,\omega_0)$, it must be
\begin{equation*}
	\begin{aligned}
	&\frac1{\mu(0)}\left[\log\left(\frac{(\mu(0)-\lambda_-(0))(\mu(0)-\lambda_+(0))}{(\mu(0)+\lambda_-(0))(\mu(0)+\lambda_+(0))}\right)\!+2\pi\ri m\right]\\
	&=\frac1{\mu(\kappa)}\!\left[\log\left(\frac{(\mu(\kappa)-\lambda_-(\kappa))(\mu(\kappa)-\lambda_+(\kappa))}{(\mu(\kappa)+\lambda_-(\kappa))(\mu(\kappa)+\lambda_+(\kappa))}\right)\!+2\pi\ri\tilde m\right]
		\end{aligned}
\end{equation*}
for some $\tilde m\in\Z$, where $\mu(\kappa):=\sqrt{-W_*(\omega_0)-\kappa}$ and $\lambda_\pm(\kappa):=\sqrt{-W_\pm(\omega_0)-\kappa}$.

Suppose there exists a sequence $(\kappa_j)_j\subset\C$ of such eigenvalues of $L(\cdot,\omega_0)$ which converges to $0$. Then, by continuity of the maps $\kappa\mapsto\mu(\kappa)$ and $\kappa\mapsto\lambda_\pm(\kappa)$, we infer $\tilde m=m$ and therefore
\begin{equation}
	f(\kappa_j):=\mu(\kappa_j)\left(\log g(0)+2\pi\ri m\right)-\mu(0)\left(\log g(\kappa_j)+2\pi\ri m\right)=0
\end{equation}
must hold, where
\begin{equation*}
	g(\kappa):=\frac{(\mu(\kappa)-\lambda_-(\kappa))(\mu(\kappa)-\lambda_+(\kappa))}{(\mu(\kappa)+\lambda_-(\kappa))(\mu(\kappa)+\lambda_+(\kappa))}.
\end{equation*}
Since $\kappa\mapsto f(\kappa)$ is differentiable at $0$, a necessary condition for $f(\kappa_j)=0$ with $\kappa_j\to0$ is $f'(0)=0$, that is
\begin{equation}\label{f_der}
	f'(0)=\mu'(0)\left(\log g(0)+2\pi\ri m\right)-\mu(0)\frac{g'(0)}{g(0)}=0.
\end{equation}
By simple computations, one gets
\begin{equation*}
	g'(0)=\frac{g(0)}{\mu(0)}\left(\frac1{\lambda_+(0)}+\frac1{\lambda_-(0)}\right),
\end{equation*}
from which, by \eqref{f_der} and the definition of $d$ in \eqref{cond_d_cD|M|cD}, one infers
$$-d-\left(\frac1{\lambda_+(0)}+\frac1{\lambda_-(0)}\right)=0.$$
This contradicts the fact that $d>0$ and that $\lambda_\pm$ are chosen with positive real part. As a consequence, we deduce that $\omega_0$ is isolated in the sense of (E2).

\paragraph{Bifurcation diagrams}

The numerical computations below were obtained using a centered finite difference discretization of fourth order on an equispaced grid and with the condition that $\varphi(x)=0$ for $x$ outside the computational interval. The linear eigenfunction $\varphi_0$ was computed using Matlab's built in eigenvalue solver ${\rm eigs}$. The computation of the nonlinear solution $\varphi$ was done via the standard Newton's iteration.

\paragraph{Case 2} In this case the linear susceptibility $\hat\chi^{(1)}$ is chosen as in \eqref{chi1_w_3layers_homog}, \eqref{E:eta_gD0MlD} with parameters as in Section \ref{S:3layers} ($k=2$, $\gamma=0$ and $\eta_\pm=9.2\pm1.28\ri$). The linear eigenvalue is selected as $\omega_0=3.8275$. The last condition to check is (Wt). A numerical approximation produces $\langle\partial_\omega W(\cdot,\omega_0)\varphi_0,\varphi_0^*\rangle \approx 7.602.$
\begin{figure}[h!]
	\centering
	\begin{subfigure}[b]{0.4\linewidth}
		\includegraphics[width=\linewidth]{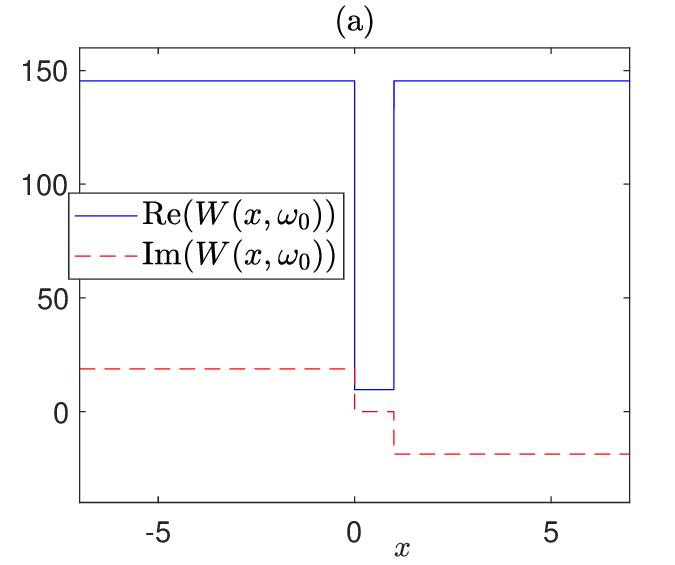}
	\end{subfigure}
	\quad
	\begin{subfigure}[b]{0.4\linewidth}
		\includegraphics[width=\linewidth]{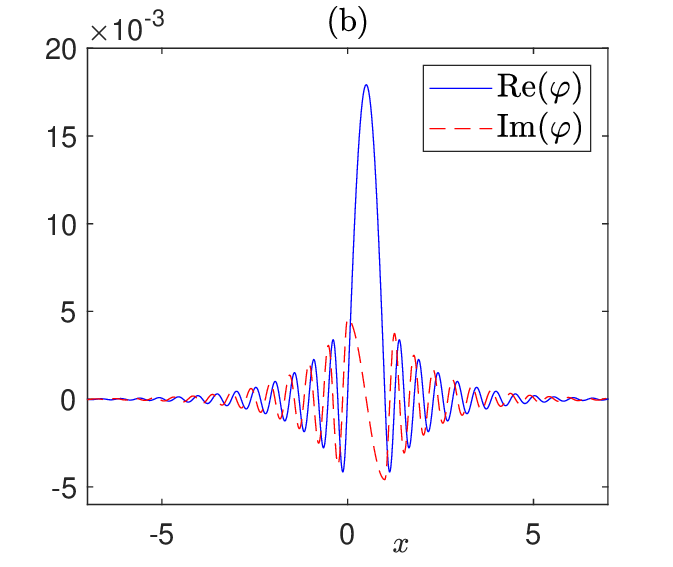}
	\end{subfigure}	
	\begin{subfigure}[b]{0.4\linewidth}
		\includegraphics[width=\linewidth]{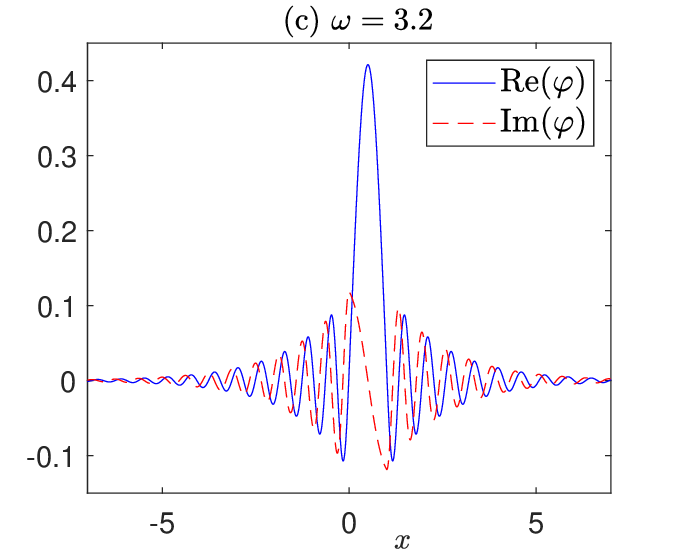}
	\end{subfigure}
	\quad
	\begin{subfigure}[b]{0.4\linewidth}
		\includegraphics[width=\linewidth]{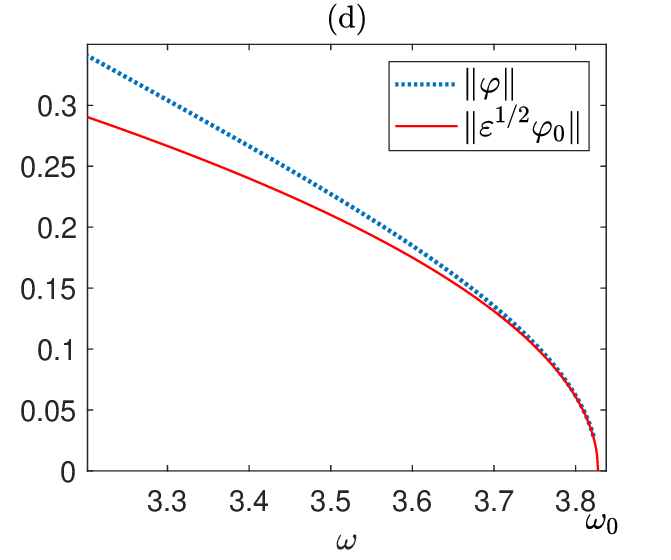}
	\end{subfigure}
	\caption{{\footnotesize Equation \eqref{NLpb}, \eqref{chi1_w_3layers_homog}, \eqref{E:eta_gD0MlD} with the parameters: $d=1$, $\eta_\pm=9.2\pm1.28\ri$, and $k=2$. (a) The potential $W(\cdot,\omega_0)$ at $\omega_0\approx 3.8275$. (b) The eigenfunction $\varphi_0$ of  \eqref{Lpb} at $\omega_0$. (c) The solution of \eqref{NLpb} at $\omega=3.2$. (d) The bifurcation diagram $(\omega, \|\varphi\|)$ (dashed blue) and the approximation $(\omega_0+\varepsilon \nu, \|\varepsilon^{1/2}\varphi_0\|)$ (full red) starting at $\omega_0$.}}
	\label{DMD-PT_bifurcation_diagram}
\end{figure}

Figure \ref{DMD-PT_bifurcation_diagram} (a) shows the resulting potential $W(\cdot,\omega_0)$.  In Figure \ref{DMD-PT_bifurcation_diagram} (d) the bifurcation diagram is shown, where the actual (numerically computed) branch is plotted with the dashed blue line for $\omega \in (3.2,\omega_0)$, while the first order approximation $(\omega_0+\eps \nu, \|\varepsilon^{1/2}\varphi_0\|_{L^2})$ for $\eps\in (0,(\omega_0-3.2)/|\nu|)$ is plotted in full red. The numerical value of $\nu$ is $\nu \approx -7.4226$. A good agreement is observed between the asymptotic and the numerical curves in the vicinity of $\omega_0$. The eigenfunction $\varphi_0$ is plotted in Figure \ref{DMD-PT_bifurcation_diagram} (b). Finally, Figure \ref{DMD-PT_bifurcation_diagram} (c) shows the solution $\varphi$ at $\omega = 3.2,$ i.e. at the last $\omega$ in the continuation procedure.

\paragraph{Case 3} Here we choose $\hat\chi^{(1)}$ as in \eqref{chi1_w_3layers_homog}, \eqref{E:eta_MDM} with parameters as in Section \ref{S:3layers} ($k=2$, $\gamma^+=-\gamma^-=0.5$, and $\eta_*=0.2$). The linear eigenvalue is selected as $\omega_0=2.8096$. Again we have to check condition (Wt): a numerical approximation produces $\langle\partial_\omega W(\cdot,\omega_0)\varphi_0,\varphi_0^*\rangle\approx 6.202.$

The resulting bifurcation diagram is shown in Figure \ref{MDM_bifurcation_diagram} (d), where again the actual branch is plotted with the dashed blue line, while the first order approximation $(\omega_0+\eps \nu, \|\varepsilon^{1/2}\varphi_0\|)$ for $\eps\in (0,(\omega_0-1.5)/|\nu|)$ is plotted in full red. We get numerically $\nu \approx -2.7233$. Once again, a good agreement is observed between the asymptotic and the numerical curves. The eigenfunction $\varphi_0$ is plotted in Figure \ref{MDM_bifurcation_diagram} (b). Figure \ref{MDM_bifurcation_diagram} (c) shows the solution $\varphi$ at $\omega = 1.5,$ i.e. at the last $\omega$ in the continuation procedure.
\begin{figure}[h!]
	\centering
	\begin{subfigure}[b]{0.4\linewidth}
		\includegraphics[width=\linewidth]{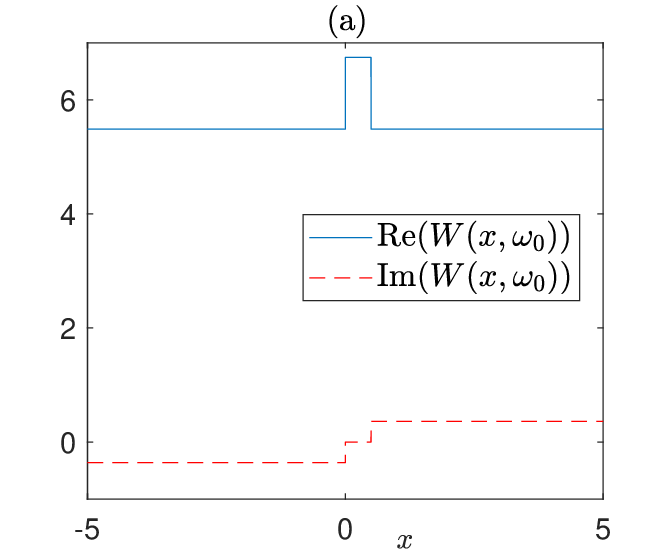}
	\end{subfigure}
	\quad
	\begin{subfigure}[b]{0.4\linewidth}
		\includegraphics[width=\linewidth]{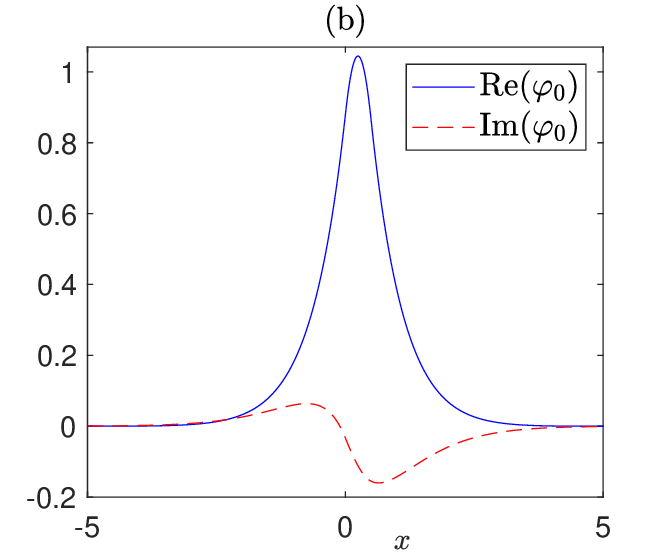}
	\end{subfigure}
	
	\begin{subfigure}[b]{0.4\linewidth}
		\includegraphics[width=\linewidth]{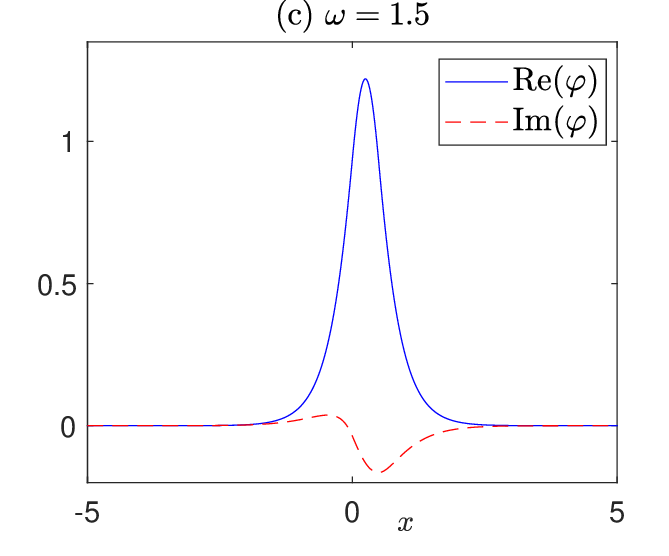}
	\end{subfigure}
	\quad
	\begin{subfigure}[b]{0.4\linewidth}
		\includegraphics[width=\linewidth]{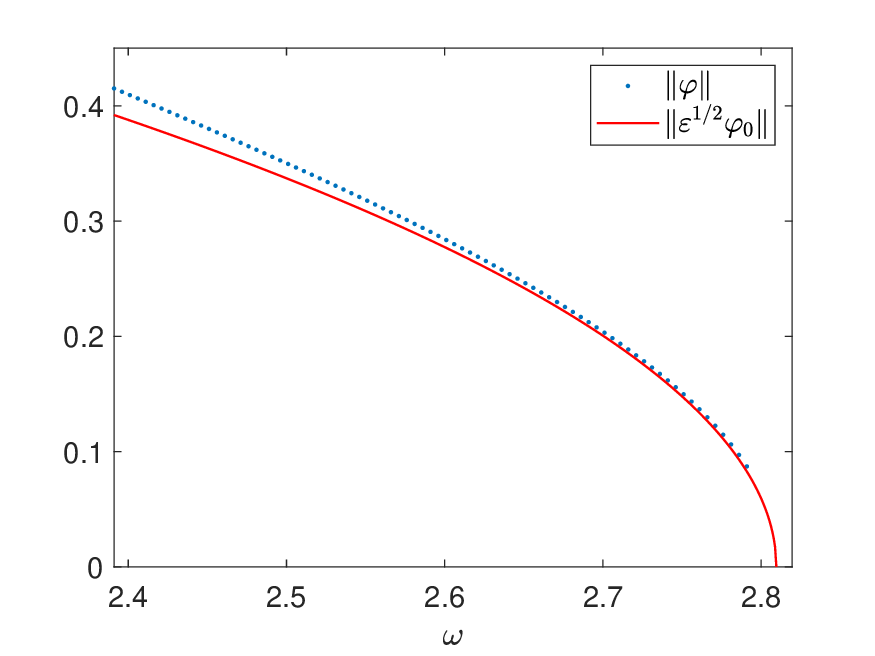}
	\end{subfigure}
	\caption{{\footnotesize Equation \eqref{NLpb}, \eqref{chi1_w_3layers_homog}, \eqref{E:eta_MDM} with the parameters: $d=0.5$, $\gamma^+=-\gamma^-=0.5$, and $\eta_*=0.2$. (a) The potential $W(\cdot,\omega_0)$ at $\omega_0\approx 2.8096$. (b) The eigenfunction $\varphi_0$ of \eqref{Lpb} at $\omega_0$. (c) The solution of \eqref{NLpb} for $\omega=1.5$. (d) The bifurcation diagram $(\omega, \|\varphi\|)$ (dashed blue) and the approximation $(\omega_0+\varepsilon \nu, \|\varepsilon^{1/2}\varphi_0\|)$ (full red) starting at $\omega_0$.}}
	\label{MDM_bifurcation_diagram}
\end{figure}

%----------------------------------------------------------
\section*{Acknowledgments}
This research is supported by the \emph{German Research Foundation}, DFG grant No. DO1467/4-1.

%%%%%%%%%%%%%%%%%%%%%%%%%%%%%%%%%%%%%%%%%%%%%%%%%%%%%%%%%%%%%%%%%%%%%%%%%%%%%%%%%%%%%%%%%%%
% Bibliography
%%%%%%%%%%%%%%%%%%%%%%%%%%%%%%%%%%%%%%%%%%%%%%%%%%%%%%%%%%%%%%%%%%%%%%%%%%%%%%%%%%%%%%%%%%%
\bibliographystyle{plain}
\bibliography{bibliography3}

\end{document}